\begin{document}

\newtheorem{theorem}{Theorem}[section]
\newtheorem{prop}[theorem]{Proposition}
\newtheorem{lemma}[theorem]{Lemma}
\newtheorem{cor}[theorem]{Corollary}
\newtheorem{conj}[theorem]{Conjecture}

\theoremstyle{definition}
\newtheorem{definition}[theorem]{Definition}
\newtheorem{rmk}[theorem]{Remark}
\newtheorem{eg}[theorem]{Example}
\newtheorem{qn}[theorem]{Question}
\newtheorem{defn}[theorem]{Definition}

\numberwithin{equation}{section}

\newcommand{\Q}{{\mathbb Q}}
\newcommand\F{{\mathbb F}}
\newcommand{\Ga}{\Gamma}
\newcommand\Z{{\mathbb{Z}}}
\newcommand\natls{{\mathbb{N}}}
\newcommand\ZG{{\mathbb{Z}}G}
\newcommand\til{\widetilde}

\title[$H_1$--semistability for projective groups]{$H_1$--semistability for projective groups}

\author[I. Biswas]{Indranil Biswas}
\address{School of Mathematics, Tata Institute of Fundamental
Research, Homi Bhabha Road, Bombay 400005, India}
\email{indranil@math.tifr.res.in}
\author[M. Mj]{Mahan Mj}
\address{RKM Vivekananda University, Belur Math, WB 711202, 
India}
\email{mahan.mj@gmail.com, mahan@rkmvu.ac.in}

\subjclass[2000]{57M50, 32Q15, 57M05 (Primary); 14F35, 32J15 (Secondary)}

\keywords{semistability K\"ahler group, projective group, duality group, cohomological dimension,
holomorphically convex}

\date{\today}

\thanks{The first author acknowledges the support of the J. C. Bose Fellowship. Research
of the second author is partly supported by CEFIPRA Indo-French Research Grant 4301-1}

\begin{abstract}
We initiate the study of the asymptotic topology of groups that can be realized as fundamental groups
of smooth complex projective varieties with holomorphically convex universal covers
(these are called here as holomorphically convex groups). We prove the $H_1$-semistability
conjecture of Geoghegan for holomorphically convex groups. In view of a theorem
of Eyssidieux, Katzarkov, Pantev and Ramachandran \cite{ekpr}, this implies
that  linear projective groups satisfy the $H_1$-semistability conjecture. 
\end{abstract}

\maketitle

\tableofcontents

\section{Introduction} 

\subsection{Statement of results}

The homological semistability conjecture formulated by Geoghegan, \cite[Conjecture 5,
Section 6.4]{guilb}, is equivalent to the statement that $H^2(G,\, \ZG)$ is free abelian for every
one-ended finitely presented group \cite[Section 13.7]{geogh}, \cite{GM1, GM2}.
(Geoghegan's conjecture was formulated originally as a question in 1979 \cite{guilb}.)
This conjecture has been established (in a stronger form)
for several special classes of groups arising naturally in the context of geometric group
theory: One-relator groups, free products of semistable groups with amalgamation along
infinite groups, extensions of infinite groups by infinite groups, (Gromov) hyperbolic
groups, Coxeter groups, Artin groups and so on \cite{Mi1, Mi2, Mi3, MiTs, MiTs2}. In
this paper we establish this conjecture for groups coming from a completely different
geometric source: fundamental groups of smooth projective varieties with holomorphically 
convex universal cover. For convenience we call them as holomorphically convex groups.

\begin{theorem}[{See Theorem \ref{rest}}]\label{omni1}
Let $G\,=\, \pi_1(X)$ be a torsion-free holomorphically
convex group. Then $H^2(G, \, \ZG)$ is a free abelian group. 
\end{theorem}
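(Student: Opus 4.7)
The plan is to reduce the computation of $H^2(G,\ZG)$ to the topology at infinity of the universal cover $\widetilde{X}$, and then to exploit holomorphic convexity via the Cartan--Remmert reduction. Let $r\colon \widetilde{X} \to Y$ be the Remmert reduction: $Y$ is a normal Stein space, and $r$ is a proper, holomorphic, surjective map with connected fibers. Since $\widetilde{X}$ is simply connected and the fibers of $r$ are connected, $Y$ is simply connected; since $X$ is projective of complex dimension $n$, one has $\dim_{\mathbb{C}}Y\leq n$.

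The key input is the topology of Stein spaces, in the spirit of Andreotti--Frankel and its extension to singular Stein spaces by Hamm: $Y$ carries a strictly plurisubharmonic exhaustion whose Morse theory exhibits $Y$ as a CW complex of real dimension $\leq n$, and simultaneously controls the topology of complements of large sublevel sets. In particular, the inverse system of first homology groups of neighborhoods of infinity in $Y$ is well-behaved.

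To pass to $G$-cohomology, one uses $H^2(G,\ZG)\,\cong\,H^2_c(EG)$, constructing a model for $EG$ by attaching $G$-equivariant cells of dimension $\geq 3$ to $\widetilde{X}$; since such attachments affect neither the ends nor the pro-$H_1$ system at infinity, it suffices to work directly with $\widetilde{X}$. By the Geoghegan--Mihalik characterization, freeness of $H^2(G,\ZG)$ for one-ended $G$ is equivalent to $H_1$-semistability, namely that the pro-system $\{H_1(\widetilde{X}\setminus C)\}$, indexed by compact subsets $C\subset\widetilde{X}$, is Mittag--Leffler. (The multi-ended case reduces to the one-ended case via a Stallings splitting of the torsion-free group $G$.) I would verify $H_1$-semistability by pulling back a Stein exhaustion of $Y$ under $r$: properness with compact fibers means that preimages of sublevel sets exhaust $\widetilde{X}$, and a Leray-type argument would compare the pro-$H_1$ systems of ends of $\widetilde{X}$ and of $Y$ up to contributions of the fibers.

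I expect the real work to lie in two places. First, $Y$ is generally singular, so the Morse theory on its exhaustion requires a Whitney stratification or a resolution, and one must ensure these do not disrupt the pro-$H_1$ calculation. Second, the fibers of $r$ are compact complex analytic varieties which may carry nonzero first Betti number, and their $H_1$ classes could \emph{a priori} propagate into the inverse system at infinity; these must be shown either to vanish eventually or to be absorbed into stable summands. Coordinating these local fiber contributions with the global Stein structure of $Y$, in a way compatible with the deck action of $G$, is the principal obstacle.
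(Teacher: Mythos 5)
Your strategy runs in the opposite direction from the paper's and, as written, stops exactly where the difficulty begins. You propose to verify $H_1$-semistability of the pro-system $\{H_1(\widetilde{X}\setminus C)\}$ geometrically and then invoke the Geoghegan--Mihalik equivalence to conclude that $H^2(G,\,\ZG)$ is free abelian. That framework is legitimate (and the one-ended reduction via Stallings is unnecessary: the equivalence as used in the paper needs no one-endedness, and infinite K\"ahler groups have one end anyway). But the two issues you flag at the end --- singular Morse theory on the Stein reduction, and above all the fact that the non-trivial fibres of $r$ are configurations of projective curves with possibly nonzero first Betti number, lying over a discrete set $P\subset Y$ that in general accumulates at infinity, so their $H_1$ classes really do enter the pro-system of neighborhoods of infinity --- are not loose ends to be tidied; they are the entire content of the theorem in your approach, and you give no mechanism for showing those classes eventually stabilize. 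As it stands this is a genuine gap, not a completed alternative proof.

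The paper sidesteps the analysis at infinity altogether. After cutting down to a surface by the Lefschetz hyperplane theorem (Proposition \ref{hchid}), it runs the Leray--Serre spectral sequence of the Borel fibration with fibre $\widetilde X$ over $K(G,1)$ and coefficients $R=\ZG$ (Proposition \ref{leraycoh}); since $H^i(\widetilde X)=0$ for $i\neq 0,2$, the resulting exact sequence gives an injection $H^2(G,\,\ZG)\hookrightarrow H^2(X,\,p_!\Z)\cong H^2_c(\widetilde X)\cong H_2(\widetilde X)\cong \pi_2(X)$. All the geometric input is then concentrated in Proposition \ref{free}: $\pi_2(X)$ is free abelian, proved by a Mayer--Vietoris comparison of $\widetilde X$ with its Cartan--Remmert reduction $\widetilde Y$, using Andreotti--Narasimhan for the Stein space $\widetilde Y$ and Gurjar's analysis of the contracted fibres. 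Since a subgroup of a free abelian group is free abelian, the theorem follows, and $H_1$-semistability is deduced as a corollary rather than being the thing one must prove. To salvage your route you would essentially have to reprove this injection in pro-homology language at infinity, which is harder, not easier, than the spectral sequence argument.
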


A group is called linear if it is a subgroup of $\text{GL}(n, {\mathbb C})$ for
some $n$. If $X$ is a smooth complex projective variety such that $\pi_1(X)$ is
linear, then the universal cover $X$
is holomorphically convex \cite{ekpr}. Therefore, Theorem \ref{omni1} has the
following corollary:

\begin{cor}[{See Corollary \ref{lincd2cor}}]\label{cor-1}
If $G$ is a linear torsion-free projective group,
then $H^2(G, \, \ZG)$ is a free abelian group.
\end{cor}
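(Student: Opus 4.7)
The plan is to deduce Corollary \ref{cor-1} as a direct consequence of Theorem \ref{omni1} combined with the deep result of Eyssidieux--Katzarkov--Pantev--Ramachandran \cite{ekpr}. So rather than a fresh argument, the proof will essentially be a one-line invocation, and the work is in identifying the hypotheses.

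First, I would unpack the hypothesis: by definition $G$ is a linear torsion-free projective group, so there exists a smooth complex projective variety $X$ with $\pi_1(X) \,=\, G$, and $G$ admits a faithful (or at least some) embedding into $\text{GL}(n,\mathbb{C})$ for some $n$. Next, I would invoke \cite{ekpr}: for any smooth complex projective variety $X$ whose fundamental group is linear, the universal cover $\widetilde X$ is holomorphically convex. This exactly says that $G \,=\, \pi_1(X)$ is a holomorphically convex group in the sense defined just before Theorem \ref{omni1}.

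Having verified both hypotheses (torsion-free and holomorphically convex), I would then apply Theorem \ref{omni1} to conclude that $H^2(G,\, \ZG)$ is free abelian, which is precisely the content of Corollary \ref{cor-1}.

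There is no real obstacle here: the analytic content (Shafarevich-type conjectures for linear fundamental groups) is entirely absorbed into the citation of \cite{ekpr}, and the group-theoretic/topological content (the $H_1$-semistability conclusion from holomorphic convexity) is absorbed into Theorem \ref{omni1}. The only mild subtlety worth flagging is the torsion-free hypothesis, which must be transported from the group $G$ to justify that Theorem \ref{omni1} applies verbatim; since torsion-freeness is a condition on $G$ itself and is assumed in the corollary, this is immediate and requires no further discussion.
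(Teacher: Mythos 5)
Your proposal is correct and is exactly the paper's argument: the paper derives this corollary by combining Theorem \ref{rest} (the statement of Theorem \ref{omni1}) with Theorem \ref{linsc}, the Eyssidieux--Katzarkov--Pantev--Ramachandran result that smooth projective varieties with linear fundamental group have holomorphically convex universal cover. Nothing further is needed.
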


It is an open question whether the dualizing module of a duality group $G$ is a free
abelian group or not \cite[p. 224]{brown}. It follows from Theorem \ref{omni1} that this
is indeed the case if $G$ is holomorphically convex of cohomological dimension two.

\begin{prop}[{See Proposition \ref{idcor}}]\label{idcor0}
Let $G$ be a holomorphically convex group of dimension two. Then $G$ is a duality
group with free dualizing module.
\end{prop}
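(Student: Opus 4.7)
The plan is to verify directly the two defining properties of a duality group of dimension $n=2$: namely, (i) $H^{i}(G,\ZG)=0$ for $i<2$, and (ii) $H^{2}(G,\ZG)$ is torsion-free. The free-abelian statement for the dualizing module will follow along the way from the stronger conclusion of Theorem~\ref{omni1}.

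First I would invoke Theorem~\ref{omni1}. Since $G$ has finite cohomological dimension, $G$ is automatically torsion-free, so Theorem~\ref{omni1} applies and gives that $H^{2}(G,\ZG)$ is free abelian. This takes care of (ii) in its strong form and identifies the dualizing module, once $G$ is known to be a duality group, as a free abelian group. Next, since $\mathrm{cd}(G)=2>0$, the group $G$ is infinite, so $H^{0}(G,\ZG)=0$, which is the $i=0$ half of (i).

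The remaining step, which I expect to be the main obstacle, is the vanishing $H^{1}(G,\ZG)=0$, equivalently, that $G$ is one-ended. I would argue by contradiction via Stallings' theorem on ends. If $G$ were multi-ended, then, being torsion-free, it would split non-trivially over the trivial subgroup; so $G$ would be either isomorphic to $\Z$ or would admit a non-trivial free product decomposition $G=A*B$ (an HNN extension over the trivial group being itself a free product with $\Z$). The case $G\cong\Z$ is excluded since $\mathrm{cd}(\Z)=1\neq 2$. Any non-trivial free product decomposition is incompatible with $G$ being a projective, hence K\"ahler, group, by classical results on K\"ahler groups (e.g.\ via harmonic map techniques to trees, or Hodge-theoretic obstructions). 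This contradiction forces $G$ to be one-ended.

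The substantive obstacle is this last non-splitting step: beyond Theorem~\ref{omni1}, the essential geometric input is the fact that K\"ahler (in particular projective) groups do not split as non-trivial free products, and the plan is to quote this from the literature rather than reprove it. Once one-endedness is established, (i) and (ii) combine with the definition of a duality group to identify $G$ as a duality group of dimension $2$, with dualizing module the free abelian group $H^{2}(G,\ZG)$.
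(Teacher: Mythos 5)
Your proposal is correct, but it does not follow the paper's (largely implicit) route at the one substantive step. The paper states Proposition \ref{idcor} with no written proof, presenting it as a formal consequence of Theorem \ref{rest} and the Bieri--Eckmann criterion (Theorem \ref{dual}); the skeleton you set up is the same ($H^0(G,\ZG)=0$ since $G$ is infinite, $H^{i}(G,\ZG)=0$ for $i>2$ since $\mathrm{cd}(G)=2$, and $H^2(G,\ZG)$ free abelian by Theorem \ref{omni1}, which supplies the free dualizing module). Where you diverge is the vanishing of $H^1(G,\ZG)$: you import Stallings' ends theorem together with the Gromov/Arapura--Bressler--Ramachandran theorem that K\"ahler groups admit no nontrivial free product decomposition. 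That is a correct and legitimate argument, but the paper's own machinery yields this step internally: reducing to a surface via Proposition \ref{hchid}, either the Cartan--Remmert reduction is a curve and Lemma \ref{rs} makes $G$ a closed surface group (already a Poincar\'e duality group with dualizing module $\Z$), or it is a surface and the spectral sequence of Proposition \ref{leraycoh} gives $H^1(G,\ZG)\cong H^1(X,p_!\Z)=H^1_c(\til{X},\Z)\cong H_3(\til{X},\Z)=0$ by Lemma \ref{inj}. Your route buys independence from the holomorphic-convexity computations for that step at the price of quoting a deep external theorem on K\"ahler groups; the paper's route stays inside the spectral-sequence framework it has already built. One point both treatments leave tacit but which you should record: the Bieri--Eckmann criterion also requires $G$ to be of type $FP$ (and $H^2(G,\ZG)\neq 0$, so that the duality dimension is exactly $2$); both follow from finite presentability together with $\mathrm{cd}(G)=2$ by standard facts from \cite{brown}.
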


The key ingredients in the proofs of Theorem \ref{omni1}, Corollary
\ref{cor-1} and Proposition \ref{idcor0} include
\begin{enumerate}
\item topology (especially second homotopy group) of smooth complex projective 
surfaces with holomorphically convex universal cover (Section \ref{shaf}),

\item a spectral sequence argument for computing group cohomology with local 
coefficients (Section \ref{specs}), which was inspired in part by an argument of Klingler \cite{klingler},

\item homological group theory of duality, inverse duality and Poincar\'e duality groups (Section 
\ref{dpd}), and

\item a theorem of Eyssidieux, Katzarkov, Pantev and Ramachandran \cite{ekpr} 
showing that complex projective manifolds with linear fundamental group have 
holomorphically convex universal cover.
\end{enumerate}

\section{Preliminaries}

\subsection{Holomorphic convexity}

A connected complex manifold $M$ is called {\bf holomorphically 
convex} if for every sequence of points $\{x_i\}_{i=1}^\infty$ of $M$ without 
any accumulation point, there is a holomorphic function $f$ on $M$ such that 
the sequence of nonnegative numbers $\{|f(x_i)|\}_{i=1}^\infty$ is unbounded. 
In this paper, we initiate the study of a natural subclass of projective groups, namely groups that can be realized 
as fundamental groups of smooth complex projective varieties, of dimension
at least two, with holomorphically convex universal covers. We shall
call such groups {\bf holomorphically convex groups}. So any holomorphically
convex group is the fundamental group of a smooth complex projective surface 
with holomorphically convex universal cover (see Proposition \ref{hchid}).
A conjecture of Shafarevich asserts that all smooth
projective varieties have holomorphically convex universal covers.

Let $\mathcal{K}$, $\mathcal{P}$, $\mathcal{HC}$ denote respectively the class of
K\"ahler groups, projective groups and holomorphically
convex groups.

It is clear that $\mathcal{HC} \subset \mathcal{P} \subset \mathcal{K}$. 
The question of reversing the last inclusion is a well-known open problem. 
The following test
question asks explicitly if the first inclusion can be reversed. It can be 
thought of as a group-theoretic version of the Shafarevich conjecture.

\begin{qn} \label{gsc} {\bf (Group-theoretic Shafarevich conjecture)}
Is $\mathcal{HC} = \mathcal{P}$? \end{qn}

The (usual) Shafarevich conjecture certainly implies a positive answer to 
Conjecture \ref{gsc} via the Lefschetz hyperplane Theorem.

\subsection{Semistability}

A subcomplex $A$ of a CW complex $X$ is called \textbf{full} if it is the largest
subcomplex of $X$ among all subcomplexes with the property that the 0-skeleton
coincides with the 0-skeleton $A_0$ of $A$. The full subcomplex of $X$ generated by
the vertices of $X^0 \,\setminus\, A^0$ is called the
CW \textbf{complement} of $A$, and it is denoted by $(X \setminus A)_{CW}$.

An inverse sequence $\{G_n\}_{n\in \mathbb N}$ of groups is called \textbf{semistable} 
if for each $n$ there exists an integer $\phi (n) \,\geq\, n$ such that for all
$k\,\geq\,\phi (n)$, we have $\text{image}(f_n^{\phi (n)})\,= \,\text{image}(f_n^k)$, where
$f^i_j\,:\, G_i\,\longrightarrow\, G_j$ is the homomorphism in the inverse sequence.

A CW complex $X$ satisfies $H_1$--\textbf{semistability} if
the sequence $\{ H_{1} ((\til{X} \setminus K_n)_{CW},\, \Z)\}_{n\in \mathbb N}$ is semistable.
A finitely presented group $G$ satisfies $H_1$--\textbf{semistability} if
some (hence any) finite complex $X$ with $\pi_1(X)\,=\, G$ satisfies $H_1$--semistability. 

It follows from work of Geoghegan and Mihalik \cite[Section 13.7]{geogh}, \cite{GM1, GM2}
that $G$ satisfies $H_1$--semistability if and only if $H^2(G, \,\ZG)$ is free abelian. 

In a certain sense, the notion of semistability was motivated by a Theorem of Farrell
\cite{farrell} which may be thought of as a $H^2(G,\, \ZG)$ (or two-dimensional) version
of the fact that a finitely presented group has 0, 1, 2 or infinitely many ends.
Geoghegan has given examples of {\em finitely generated} 
(but not finitely presented) groups $G$ such
that $H^2 (G, \ZG)$ is not a free abelian group (see \cite[p. 321, Ex. 1]{geogh}).

We shall refer to the integral cohomological dimension of a group $G$ simply as
its dimension. Note therefore that any group of finite dimension is {\it torsion-free}.

\section{Holomorphically convex universal cover}\label{shaf}

A complex analytic manifold $M$ is holomorphically 
convex if and only if it admits a proper holomorphic map
$\Pi : M \longrightarrow Q$ to a Stein space $Q$ such that $\Pi_\ast 
{\mathcal{O}}_M \,=\, {\mathcal{O}}_Q$. The Stein space $Q$ is referred to as 
the Cartan--Remmert reduction of $M$ \cite{rem}.

\begin{prop} Let $G$ be a holomorphically convex group, meaning
$G\,=\, \pi_1(X)$, where $X$ is a smooth complex projective
variety with holomorphically convex universal cover and
$\dim X \, \geq\, 2$. Then there is
a smooth complex projective surface $S$ with holomorphically convex universal 
cover such that $G\,=\, \pi_1(S)$.\label{hchid} \end{prop}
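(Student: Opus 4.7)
The plan is to realize $G$ as the fundamental group of an iterated smooth hyperplane section of $X$, and then to argue that holomorphic convexity of the universal cover is inherited by this section.

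First, I would fix a projective embedding $X \hookrightarrow \mathbb{P}^N$ via a very ample line bundle on $X$. By Bertini's theorem, a generic hyperplane $H \subset \mathbb{P}^N$ meets $X$ transversally, so $X_1 := X \cap H$ is a smooth complex projective subvariety of dimension $\dim X - 1$. The Lefschetz hyperplane theorem yields an isomorphism $\pi_1(X_1) \stackrel{\sim}{\longrightarrow} \pi_1(X)$ whenever $\dim X_1 \geq 2$. Iterating this $\dim X - 2$ times (the case $\dim X = 2$ being trivial), I obtain a smooth complex projective \emph{surface} $S \subset X$ with $\pi_1(S) \cong \pi_1(X) = G$.

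It then remains to check that the universal cover $\widetilde{S}$ is holomorphically convex. Let $p : \widetilde{X} \longrightarrow X$ denote the universal covering. Since $S$ is closed in $X$ and is a complex submanifold, the preimage $p^{-1}(S)$ is a closed complex submanifold of $\widetilde{X}$. A basepoint calculation, using the Lefschetz isomorphism $\pi_1(S) \cong \pi_1(X)$ just established, shows that the covering $p^{-1}(S) \longrightarrow S$ corresponds to the trivial subgroup of $\pi_1(S)$; thus each connected component of $p^{-1}(S)$ is a copy of $\widetilde{S}$, and $\widetilde{S}$ sits inside $\widetilde{X}$ as a closed complex submanifold.

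Finally, I would invoke the following routine observation: a closed complex submanifold $N$ of a holomorphically convex manifold $M$ is itself holomorphically convex. Indeed, if $\{x_i\} \subset N$ has no accumulation point in $N$, then by closedness of $N$ in $M$ it has no accumulation point in $M$; so some holomorphic function on $M$ is unbounded on $\{x_i\}$, and its restriction to $N$ witnesses the required property. Applying this with $N = \widetilde{S}$ and $M = \widetilde{X}$ completes the proof. The only step needing even mild care is the identification of components of $p^{-1}(S)$ with $\widetilde{S}$, and this is immediate once the Lefschetz isomorphism is in hand; everything else is a formal unwinding of the definition of holomorphic convexity.
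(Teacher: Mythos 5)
Your proposal is correct and follows essentially the same route as the paper: take a Lefschetz hyperplane section to reduce the dimension, lift it to a proper (closed) embedding $\widetilde{S}\hookrightarrow\widetilde{X}$ of universal covers using the $\pi_1$-isomorphism, and restrict holomorphic functions to verify holomorphic convexity. Your added care with Bertini and with identifying the components of $p^{-1}(S)$ only makes explicit what the paper leaves implicit.
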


\begin{proof}
Assume that $\dim X \, >\, 2$. Let $\til{X}$ be the universal cover of $X$. Fix 
an embedding $X \, \hookrightarrow\, {\mathbb{CP}}^n$. By the Lefschetz 
hyperplane theorem, the inclusion
$$
\iota\, :\, Y \,:=\, X \cap H\, \hookrightarrow\, X
$$
induces an isomorphism of fundamental groups, where $H$ is a suitable hyperplane. Hence $\iota$ lifts to a proper 
holomorphic embedding
$\til{\iota}\,:\, \til{Y}\,\longrightarrow \,\til{X}$, where $\til{Y}$ is the
universal cover of $Y$. Therefore, $\til Y$ is a complex analytic submanifold
of $\til X$.

Since the embedding of $\til Y$ in $\til X$ is proper,
it follows that if $\{x_n\}$ is a sequence of points of $Y$ without any
accumulation point in $Y$, then $\{x_n\}$ does not have have any
accumulation point in $X$. By holomorphic
convexity of $\til X$, there is a holomorphic function $f$ on $\til X$ such 
that the sequence $\{ |f(x_n)| \}$ is unbounded. Considering the function
$f\circ \iota$ we conclude that $\til Y$ is holomorphically convex.
Now the proposition is deduced inductively.
\end{proof}

\subsection{Higher cohomology groups}

In the rest of this section, we assume $X$ to be a smooth complex projective 
surface
such that the universal cover $\til X$ of $X$ is noncompact 
and holomorphically convex. In particular, $\pi_1(X)$ is an infinite group.
The Cartan--Remmert reduction of $\til X$ will be denoted by $\til Y$. We
note that $\til Y$ is not a point because $\til X$ is noncompact.

Narasimhan, \cite{nar}, and Goresky--MacPherson, \cite{GorMac}, gave 
restrictions on the topology of $\til Y$.

\begin{theorem}[\cite{nar, GorMac}]\label{stein}
Let $M$ be a (not necessarily smooth) connected complex projective surface with 
Stein universal cover $\til{M}$. Then $H^i(\til{M},\, {\mathbb Z})\,=\,0$ for 
$i\,\geq\, 3$. Moreover, $\til{M}$ is homotopy equivalent to a $2$-dimensional 
${\rm CW}$-complex.
\end{theorem}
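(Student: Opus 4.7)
\medskip

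\noindent\textbf{Proof plan for Theorem \ref{stein}.}
The plan is to deduce both assertions from the classical fact that any Stein space of complex dimension $n$ is homotopy equivalent to a CW complex of real dimension at most $n$. In the smooth case this is the Andreotti--Frankel theorem: a Stein manifold admits a strictly plurisubharmonic exhaustion $\varphi\colon \til M\to \mathbb{R}$, and the Hessian of such a $\varphi$ is positive on a real $n$-dimensional complex subspace at every point, so every Morse-type critical point of $\varphi$ has index at most $n$; a handle decomposition then produces the required CW model.

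I would simply apply this statement to $\til M$. Since $M$ is a connected complex projective surface, $\til M$ has complex dimension $2$; as it is Stein by hypothesis, the cited theorem gives a homotopy equivalence $\til M \simeq K$ for some $2$-dimensional CW complex $K$, which is the second assertion. The first assertion is then immediate, since the cellular cochain complex of $K$ vanishes in degrees $\geq 3$, so $H^i(\til M, \mathbb{Z}) = H^i(K, \mathbb{Z}) = 0$ for $i\geq 3$.

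The main obstacle is that $M$, and hence $\til M$, is allowed to be singular, so the smooth Andreotti--Frankel argument does not apply verbatim: one has to control Morse-type critical behaviour on strata of lower complex dimension. This is precisely what Narasimhan handled by careful approximation arguments in \cite{nar}, and what Goresky--MacPherson systematised via stratified Morse theory in \cite{GorMac}: critical points lying in a stratum of complex dimension $k\leq n$ contribute only cells of real dimension $\leq k$, so the bound on the CW-dimension survives in the singular Stein setting. Once that input is granted, no further work is needed: the cohomological vanishing falls out of the cellular computation above.
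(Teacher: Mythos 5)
Your proposal is correct and matches what the paper does: the paper offers no proof of Theorem \ref{stein} at all, simply citing Narasimhan \cite{nar} and Goresky--MacPherson \cite{GorMac}, and your sketch accurately identifies the content of those citations (the stratified Morse-theoretic fact that a Stein space of complex dimension $n$ has the homotopy type of a CW complex of real dimension at most $n$, from which the cohomology vanishing in degrees $\geq 3$ follows by the cellular cochain computation). The only minor attribution point is that Narasimhan's paper establishes the homology vanishing and torsion-freeness directly, while the homotopy-type statement for singular Stein spaces is the part supplied by stratified Morse theory, but this does not affect the correctness of your argument.
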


\subsection{The second homotopy group}

Let $X$ be a smooth complex projective surface, and let $f\, :\, X\, 
\longrightarrow \, X'$ be the minimal model. Then $f_\ast\, :\, \pi_1(X)\,
\longrightarrow \, \pi_1(X')$ is an isomorphism; this is because
$X$ is obtained by successive blow-up of points starting with $X'$.
Therefore, we may, and we will, assume that the surface $X$
under consideration is minimal.

We use the notation $H_i(M)$ (respectively, $H^i(M)$) to denote $H_i(M,\,
\mathbb{Z})$ (respectively, $H^i(M, \,\mathbb{Z})$).

We recall a theorem of Andreotti and Narasimhan \cite{an}.

\begin{theorem}[\cite{an}]\label{free-an} 
Let $M$ be a (not necessarily smooth) complex projective surface 
whose universal cover $\til M$ is Stein. Then the second homotopy 
group $\pi_2(M)$ is free abelian.
\end{theorem}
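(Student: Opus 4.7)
The plan is to reduce everything to a statement about the universal cover $\widetilde{M}$ and then invoke Theorem \ref{stein}.

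First I would pass to the universal cover: since the covering projection $\widetilde{M} \longrightarrow M$ induces an isomorphism on all higher homotopy groups, we have $\pi_2(M)\,=\,\pi_2(\widetilde{M})$. Because $\widetilde{M}$ is by construction simply connected, the Hurewicz theorem gives a natural isomorphism
\[
\pi_2(\widetilde{M})\,\cong\, H_2(\widetilde{M},\,\mathbb{Z}).
\]
So the problem is reduced to showing that $H_2(\widetilde{M},\,\mathbb{Z})$ is free abelian.

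Next I would feed in Theorem \ref{stein}: the hypothesis that $\widetilde{M}$ is Stein tells us that $\widetilde{M}$ is homotopy equivalent to a $2$-dimensional CW-complex $K$. Homotopy equivalent spaces have the same integral homology, so $H_2(\widetilde{M},\,\mathbb{Z})\,\cong\, H_2(K,\,\mathbb{Z})$.

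Finally, for the $2$-dimensional CW-complex $K$ the chain complex vanishes above degree $2$, so there are no $2$-boundaries and
\[
H_2(K,\,\mathbb{Z})\,=\, \ker\bigl(\partial_2\,:\, C_2(K)\,\longrightarrow\, C_1(K)\bigr).
\]
This is a subgroup of the free abelian group $C_2(K)$, and subgroups of free abelian groups are free abelian. Combining the three isomorphisms gives that $\pi_2(M)$ is free abelian.

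There is no real obstacle here once Theorem \ref{stein} is on the table: the argument is essentially Hurewicz plus the elementary fact that top-dimensional cellular homology is torsion-free. The subtlety is entirely hidden in the input, namely the Narasimhan/Goresky--MacPherson result that a Stein universal cover of a projective surface has the homotopy type of a $2$-complex; without this we would only know $H_i(\widetilde{M})\,=\,0$ for $i\,\geq\,3$ as an abstract homology vanishing, which by itself does \emph{not} force $H_2$ to be free (a three-dimensional CW-complex with vanishing $H_3$ can easily have torsion in $H_2$). It is precisely the CW-dimension, not merely the cohomological dimension, that makes the argument work.
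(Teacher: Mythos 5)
Your argument is correct, but note that the paper itself offers no proof of this statement: Theorem \ref{free-an} is quoted verbatim from Andreotti--Narasimhan \cite{an} as an external input, so there is nothing internal to compare against. What you have done is supply an independent derivation from Theorem \ref{stein}: covering-space theory and Hurewicz reduce the claim to the freeness of $H_2(\til{M},\,\Z)$, and the fact that $\til{M}$ has the homotopy type of a $2$-dimensional CW-complex makes $H_2$ the kernel of $\partial_2$ inside the free abelian cellular chain group $C_2$, hence free abelian (using that arbitrary subgroups of free abelian groups, even infinitely generated ones, are free abelian). Every step is sound, and your closing remark is the right diagnosis: the vanishing $H^i(\til{M})\,=\,0$ for $i\,\geq\,3$ alone would not rule out torsion in $H_2$, so the CW-dimension bound is the load-bearing hypothesis. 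The one caveat worth flagging is historical rather than mathematical: the homotopy-dimension statement in Theorem \ref{stein} rests on Goresky--MacPherson (and Hamm), which postdates the 1962 paper of Andreotti--Narasimhan, whose original argument proceeds instead via Runge pairs and exhaustions of Stein spaces. So your route is a legitimate, shorter modern replacement for the cited proof, not a reconstruction of it; it also visibly needs only that $\til{M}$ is a Stein space of complex dimension two, so projectivity of $M$ enters only through the hypotheses of Theorem \ref{stein}.
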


\begin{prop}\label{free} Let $X$ be a smooth complex 
projective minimal surface whose universal
cover $\til X$ is holomorphically convex. Then the second 
homotopy group $\pi_2(X)$ is free abelian.
\end{prop}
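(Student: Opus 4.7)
\emph{Proof plan.} The plan is to reduce to Theorem \ref{free-an} by passing through the Cartan--Remmert reduction $\Pi : \til X \to \til Y$ of $\til X$. Here $\til Y$ is a Stein analytic space of complex dimension two, $\Pi$ is proper with connected fibers, and $\Pi$ restricts to a biholomorphism outside a discrete subset $\{p_\alpha\} \subset \til Y$; the fibers $E_\alpha := \Pi^{-1}(p_\alpha)$ are connected compact analytic curves in $\til X$. Since $\til X$ is simply connected, Hurewicz identifies $\pi_2(X)=\pi_2(\til X)$ with $H_2(\til X,\mathbb Z)$, so it suffices to show the latter is free abelian.

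My approach is to show that $\til X$ has the homotopy type of a CW complex of real dimension two. Once this is in hand, the $H_2$ of such a complex is the kernel of a boundary map into the free abelian group on the $2$-cells, hence itself free abelian. First, I would use that $\til Y$, being a two-dimensional Stein space, has the homotopy type of a CW complex of real dimension at most two (the Andreotti--Frankel theorem, extended by Hamm to possibly singular Stein spaces), with the cell structure chosen so that each $p_\alpha$ appears as a zero-cell. Second, for each $\alpha$ I would choose a contractible Stein neighborhood $V_\alpha$ of $p_\alpha$ in $\til Y$ whose preimage $U_\alpha=\Pi^{-1}(V_\alpha)$ deformation retracts onto $E_\alpha$; such $V_\alpha$ exist by the standard local theory of contractions of exceptional configurations in complex surfaces. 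Third, I would assemble a CW model of $\til X$ by combining the cells of $\til Y$ lying outside $\bigcup V_\alpha$ (lifted via the biholomorphism $\Pi$ there, giving cells of real dimension at most two) with the finite two-complex $E_\alpha$ replacing each star of $p_\alpha$, glued to the ambient cells along the link of $E_\alpha$.

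The main obstacle is the last step: one must verify that the deformation retractions $U_\alpha\to E_\alpha$ and the CW structure on the complement of $\bigcup V_\alpha$ can be chosen compatibly, so that the assembled space genuinely has the homotopy type of $\til X$ and is two-dimensional as a CW complex. A cleaner alternative would be Morse theory on a plurisubharmonic exhaustion of $\til X$ obtained by pulling back a strictly plurisubharmonic exhaustion of $\til Y$: this pulled-back function is strictly plurisubharmonic on $\til X\setminus E$, where the Andreotti--Frankel estimate forces critical points to have real index at most two, while each connected component of the compact analytic set $E$ is itself a finite CW complex of real dimension two and contributes only finitely many additional cells.
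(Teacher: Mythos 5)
Your reduction to showing that $H_2(\til X,\Z)$ is free abelian is fine, but the route you choose --- producing a CW model of $\til X$ of dimension two --- has a genuine gap at exactly the step you flag as the ``main obstacle,'' and I do not see how to close it along the lines you propose. The problem is that the two pieces you want to assemble, a $2$-complex model of $\til Y\setminus\bigcup V_\alpha$ and the curves $E_\alpha$, are glued along the links $L_\alpha=\partial U_\alpha$, which are closed orientable $3$-manifolds (links of normal surface singularities). Since $H_3(L_\alpha)\,=\,\Z$, the $L_\alpha$ are not homotopy equivalent to $2$-complexes, so the homotopy pushout of the two $2$-complexes over $\coprod L_\alpha$ carries a mapping-cylinder factor $L_\alpha\times[0,1]$ and is not a priori $2$-dimensional; the deformation retraction $U_\alpha\to E_\alpha$ does not hold rel $\partial U_\alpha$, so you cannot simply substitute $E_\alpha$ for $U_\alpha$ inside $\til X$. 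The Morse-theoretic variant fails at the same spot: the pulled-back exhaustion is constant on each $E_\alpha$, and crossing that degenerate critical level attaches all of $U_\alpha$ along the $3$-manifold $L_\alpha$, not cells of index $\le 2$. This is not a formality: $H_1(L_\alpha)$ can have torsion (e.g.\ lens-space links), and the Mayer--Vietoris sequence for $\til X=(\til X\setminus\bigcup \mathrm{int}\,U_\alpha)\cup\bigcup U_\alpha$ leaves open, without further input, both a nonzero $H_3(\til X)$ and torsion in $H_2(\til X)$. In fact, for a simply connected space with vanishing homology above degree $2$, ``homotopy equivalent to a $2$-complex'' is essentially equivalent to ``$H_2$ free abelian,'' so your intermediate claim is about as hard as the proposition itself. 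A secondary omission: you assume $\dim_{\mathbb C}\til Y=2$, but the Cartan--Remmert reduction can be an open Riemann surface, a case that needs a separate argument (the paper quotes Gurjar for it).

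The paper's proof avoids the dimension claim entirely and works homologically. Lemma \ref{inj} uses $H_3(\til Y;P)=H_3(\til Y)=0$ (Theorem \ref{stein}) to show that $H_2(F)\to H_2(\til X)$ is injective and that $H_3(\til X)=0$, where $F$ is the union of the nontrivial fibers. Then a Mayer--Vietoris comparison of $\til X=\til N\cup(\til X\setminus F)$ with $\til Y=N\cup(\til Y\setminus P)$, together with a five-lemma argument, identifies $H_2(\til X)/H_2(\til N)$ with a subgroup of $H_2(\til Y)$, which is free abelian by Andreotti--Narasimhan (Theorem \ref{free-an}); since a subgroup of a free abelian group is free, the extension splits and $H_2(\til X)\cong H_2(F)\oplus(\text{free})$ is free. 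If you want to salvage your approach, you would need to supply precisely this kind of comparison with $\til Y$ to control what the gluing along the $3$-manifold links does to $H_2$ and $H_3$ --- at which point you have reproduced the paper's argument.
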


In \cite{gurj}, Gurjar proved that the second homotopy group of 
$X$ is torsion-free if the universal cover of $X$ is 
holomorphically convex. Gurjar has communicated to us an 
observation due to Deligne that the proof in \cite{gurj} leads 
to Proposition \ref{free}. He also showed us his notes on
\cite{gurj}. Since Proposition \ref{free} is not available in 
print, we supply here a proof based on notes of Gurjar on 
\cite{gurj}.

Let
\begin{equation}\label{phi}
\phi \,:\, \til{ X}\,\longrightarrow \,\til{ Y}
\end{equation}
be the Cartan--Remmert 
reduction of $\til{ X}$. There are two cases to consider:

\begin{enumerate}
\item The Cartan--Remmert
reduction $ \til{ Y}$ is a Riemann surface. In this case, 
Proposition \ref{free} has been proved by Gurjar \cite{gurj}.

\item The Stein space $\til{ Y}$ is of complex dimension two. Andreotti and 
Narasimhan, \cite{an}, have proved that in this case
$ H_2 (\til{ X})$ is torsion-free. They further show that if $\til{ X}$ 
has only finitely many compact
analytic subvarieties, then $ H_2 (\til{ X})$ is free abelian.
\end{enumerate}

In the rest of this subsection we assume that the
complex dimension of $\til{ Y}$ is two.

The space $\til Y$ is normal. There is a discrete set $P$ of points of $\til Y$ 
such that
\begin{enumerate}
\item[(a)] $P$ consists of the singularities
of $\til Y$ and some smooth points, and

\item[(b)] $\til X$ is obtained by resolving singularities
of $\til Y$ and blowing up the smooth points in $P$.
\end{enumerate}
Since $X$ is minimal, we know that $X$, hence $\widetilde X$, does not
contain any smooth rational curve that can be contracted. Hence
$P$ consists of singularities alone.

It follows that every non-trivial fiber (meaning the fiber is not a 
single point) $F_0$ of $\phi$ in \eqref{phi} is a finite 
union of irreducible projective curves,
and $H_2 (F_0 )$ is a free abelian group, generated freely by the
homology classes of the irreducible components of $F_0$ \cite{gurj}.
Let $F$ denote the union of all nontrivial fibers of $\phi$. Hence 
$H_2 (F )$ is a free abelian group, generated freely by the
homology classes of all the irreducible components of all 
the non-trivial fibers of $\phi$.
Also $\phi\vert_{\til{X} \setminus F} \,:\, \til{X} \setminus F
\,\longrightarrow \, \til{Y} \setminus P$ is a homeomorphism.

\begin{lemma}\label{inj}
Let $\iota \,:\, F \,\longrightarrow \,\til{X}$ be the inclusion map. 
Then 
$$\iota_\ast\, : \,H_2 (F ) \,\longrightarrow\, H_2 (\til{X} )$$
is an injection. Also $H_3 (\til{X})\,=\,0$.
\end{lemma}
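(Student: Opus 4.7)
I will combine the long exact sequence of the pair $(\til X, F)$ with an excision-plus-retraction argument that identifies $H_*(\til X, F)$ with $H_*(\til Y, P)$, and then exploit that $\til Y$ is a Stein space of complex dimension two in order to kill $H_3(\til Y, P)$. The two conclusions of the lemma then drop out of one diagram chase.

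First I would establish $H_*(\til X, F) \cong H_*(\til Y, P)$. Since $P$ is a discrete set of points of the Stein space $\til Y$, choose pairwise disjoint open Stein neighborhoods $V_p$ of each $p \in P$, small enough to be contractible to $\{p\}$, and set $V = \bigsqcup_{p \in P} V_p$ and $U = \phi^{-1}(V)$. By the classical regular-neighborhood theorem for the exceptional compact analytic set of a proper modification (the content of Grauert's work underlying the Cartan--Remmert reduction), after shrinking the $V_p$ if necessary each $U_p = \phi^{-1}(V_p)$ deformation retracts onto the compact fiber $F_p = \phi^{-1}(p)$, so $U$ deformation retracts onto $F$. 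Since $\phi$ restricts to a homeomorphism $U \setminus F \,\longrightarrow\, V \setminus P$, excising these complements gives
\[
H_*(\til X, F) \;\cong\; H_*(\til X, U) \;\cong\; H_*(\til Y, V) \;\cong\; H_*(\til Y, P).
\]

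Second, I would compute $H_3(\til Y, P)$. By Narasimhan's theorem \cite{nar} (the same input that underlies Theorem \ref{stein}), the Stein space $\til Y$ of complex dimension two has the homotopy type of a CW complex of real dimension at most two; in particular $H_i(\til Y) = 0$ for $i \geq 3$. Since $P$ is discrete, $H_i(P) = 0$ for $i \geq 1$. The long exact sequence of the pair $(\til Y, P)$ then forces $H_3(\til Y, P) = 0$.

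Finally I plug into the long exact sequence of $(\til X, F)$:
\[
H_3(F) \,\longrightarrow\, H_3(\til X) \,\longrightarrow\, H_3(\til X, F) \,\longrightarrow\, H_2(F) \,\xrightarrow{\iota_*}\, H_2(\til X).
\]
The set $F$ is a union of compact complex projective curves, hence of real dimension $2$, so $H_3(F) = 0$. By the previous two steps $H_3(\til X, F) \cong H_3(\til Y, P) = 0$. Exactness then delivers both $H_3(\til X) = 0$ and injectivity of $\iota_*$ simultaneously.

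The only nontrivial technical point is the first step: arranging the simultaneous deformation retractions in the (possibly highly singular) analytic setting of $\til Y$. The key input is that each nontrivial fiber $F_p$ of the proper modification $\phi$ admits a fundamental system of neighborhoods $\phi^{-1}(V_p)$ that deformation retract onto $F_p$; this is classical, and the pairwise disjointness of the $V_p$ makes the possibly infinite size of $P$ cause no global difficulty.
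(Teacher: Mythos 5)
Your proof is correct and follows essentially the same route as the paper's: identify $H_3(\til{X}, F)$ with $H_3(\til{Y}, P)$, kill the latter using the Stein property of $\til{Y}$ (Theorem \ref{stein}) and the discreteness of $P$, and read off both conclusions from the long exact sequence of the pair $(\til{X}, F)$. The only difference is cosmetic: you justify the identification $H_*(\til{X},F)\cong H_*(\til{Y},P)$ via excision and deformation retractions onto the exceptional fibers, where the paper invokes the homotopy equivalence of the spaces obtained by coning off $F$ and $P$.
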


\begin{proof}
First observe that $H_3 (\til{X};\,F) \,=\, H_3 (\til{Y};\,P)$. Indeed,
this follows from the fact that the spaces obtained from $\til{X}$ and 
$\til{Y}$ by coning off $F$ and $P$ respectively, are homotopy equivalent. Since 
$P$ is a discrete set of points, we have
$H_3 (\til{Y}) \,=\, H_3 (\til{Y};\, P)$ by the relative homology exact
sequence for the pair $(\til{Y}\, ,P)$. As
$\til{Y}$ is Stein, we have $H_3 (\til{Y})\,=\,0$ by Theorem \ref{stein}. 
Hence $H_3 (\til{X};\, F) \,=\, 0$. Now
the relative homology exact sequence for the pair $(\til{X}, F)$ 
$$
H_3 (\til{X};\, F)\,\longrightarrow\,H_2 (F )\,\longrightarrow\,H_2
(\til{X} )
$$
yields the first statement of the lemma.

Since $H_3 (\til{X};\, F) \,=\, 0\,=\, H_3 (F )$,
the relative homology exact sequence for the pair 
$(\til{X};\, F)$
$$H_3 (F ) \,\longrightarrow\, H_3 (\til{X} ) 
\,\longrightarrow \,H_3 (\til{X};\, F) $$ yields the second statement.
\end{proof}

\bigskip

\noindent {\bf Proof of Proposition \ref{free}:}\, Let $N 
\,\subset\, \til{Y}$ be a disjoint union of compact contractible neighborhoods
of points in $P$. Let $$\til{N} \,=\, \phi^{-1} (N)\,\subset\,\til{X}\, .$$
Then $\til{N}\cup ( \til{X} \setminus F) \,= \,\til{X}$
and $N \cup ( \til{Y} \setminus P)\,=\,\til{Y}$. We have the
following commutative diagram from the Mayer--Vietoris
sequences for these two spaces:
\begin{equation}
\begin{matrix}
H_3(\widetilde{X}) & \longrightarrow & H_2({\partial N}) & 
\longrightarrow
& H_2(\til{X}\setminus F)\oplus H_2(\widetilde{N}) & \stackrel{i_1}\longrightarrow & 
H_2(\widetilde{X})&\stackrel{j_1}{\longrightarrow}\\
&& ~\Big\downarrow \stackrel{\phi_\ast}\simeq && \quad\, 
\Big\downarrow\stackrel{\phi_\ast}\simeq \quad 
\quad 
\quad \Big\downarrow && ~\Big\downarrow\phi_\ast\\
H_3(\widetilde{Y})\,(=0) & \longrightarrow & H_2(\partial N)& 
\longrightarrow &
H_2(\widetilde{Y} \setminus P) \oplus\,~ \{0\} & \stackrel{i_2}\longrightarrow & 
H_2(\widetilde{Y})&\stackrel{j_2}{\longrightarrow}
\end{matrix}
\end{equation}
$$
\begin{matrix}
&\stackrel{j_1}{\longrightarrow} & H_1(\partial N) & \longrightarrow & H_1(\til{X}\setminus F)\oplus H_1(\widetilde{N})
& \longrightarrow & 0\\
&& ~ \Big\downarrow\stackrel{\phi_\ast}\simeq && ~ \Big\downarrow\stackrel{\phi_\ast}\simeq\,
\,\quad 
\quad \,\,\,\,\, \quad ~\Big\downarrow \\
&\stackrel{j_2}{\longrightarrow} & H_1(\partial N) & \longrightarrow & 
H_1(\widetilde{Y}\setminus P) \oplus\,~ \{0\} & \longrightarrow &
H_1(\widetilde{Y})
\end{matrix}
$$

In the above diagram, $H_3(\widetilde{Y})\,=\,0$ by Theorem \ref{stein}. Also 
$i_1\vert_{H_2(\widetilde{N})}$ is injective by Lemma \ref{inj}. Hence by (a 
slight modification of the proof of) the 5-lemma, the homomorphism $\phi_\ast$ 
induces an isomorphism between $H_2(\widetilde{X})/i_1(H_2(\widetilde{N}))$
and a subgroup of $H_2(\widetilde{Y})$.

Now, $H_2(\widetilde{Y})$ is a free abelian group, because 
$\widetilde{Y}$ is 
Stein (see Theorem \ref{free-an}). Hence the the homomorphism $\phi_\ast$ 
induces an isomorphism between $H_2(\widetilde{X})/i_1(H_2(\widetilde{N}))$
and a free abelian group. It follows that 
$H_2(\widetilde{X})$ is 
isomorphic to $H_2(\widetilde{N}) \oplus H_2(\widetilde{Y})$ as 
$i_1\vert_{H_2(\widetilde{N})}$ is injective. But $H_2(\widetilde{N}) \,=\, 
H_2(F)$ is free. Hence $H_2(\widetilde{X})$ is free. 
 
Finally, since $\widetilde{X}$ is simply connected, it follows from the 
Hurewicz theorem that $\pi_2(X)\,=\,\pi_2(\widetilde{X})\,=\,H_2(\widetilde{X})$ 
is a free abelian group.\hfill $\Box$

\subsection{The second (co)homology groups}

\begin{cor} Let $X$ be a smooth complex projective minimal surface 
whose universal
cover $\til X$ is holomorphically convex. Then the second homology 
group $H_2(\til{X},\, \mathbb{Z})$ is
free abelian. Also, the second cohomology group is isomorphic to 
${\rm Hom}(H_2(\til{X},\, \mathbb{Z}), \Z)$, and it is a direct
product of copies of $\Z$. \label{cohfree}
\end{cor}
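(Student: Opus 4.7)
The plan is to deduce the corollary almost immediately from Proposition \ref{free} together with the Hurewicz theorem and the Universal Coefficient Theorem.

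First I would observe that, since $\til X$ is the universal cover of $X$, it is simply connected, so $\pi_2(\til X)\,=\,\pi_2(X)$. The Hurewicz theorem then gives $H_2(\til X,\,\Z)\,\cong\,\pi_2(\til X)$. Proposition \ref{free} asserts that $\pi_2(X)$ is free abelian, so $H_2(\til X,\,\Z)$ is free abelian. Note that we make no finiteness claim here: the free abelian group in question may have infinite rank.

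Next I would apply the Universal Coefficient Theorem for cohomology, which furnishes a short exact sequence
\[
0 \,\longrightarrow\, \mathrm{Ext}^1(H_1(\til X,\,\Z),\,\Z) \,\longrightarrow\, H^2(\til X,\,\Z) \,\longrightarrow\, \mathrm{Hom}(H_2(\til X,\,\Z),\,\Z) \,\longrightarrow\, 0.
\]
Since $\til X$ is simply connected, $H_1(\til X,\,\Z)\,=\,0$, so the $\mathrm{Ext}^1$ term vanishes and we obtain an isomorphism $H^2(\til X,\,\Z)\,\cong\,\mathrm{Hom}(H_2(\til X,\,\Z),\,\Z)$.

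Finally, writing $H_2(\til X,\,\Z)\,\cong\,\bigoplus_{i\in I}\Z$ for some indexing set $I$ (using the first part), the standard identification $\mathrm{Hom}(\bigoplus_{i\in I}\Z,\,\Z)\,\cong\,\prod_{i\in I}\Z$ gives the desired description of $H^2(\til X,\,\Z)$ as a direct product of copies of $\Z$. There is no real obstacle here: all the work has been carried out in Proposition \ref{free}, and the only subtlety worth flagging is that one must allow $I$ to be infinite, which is precisely why the cohomology group is expressed as a direct product rather than a direct sum of copies of $\Z$.
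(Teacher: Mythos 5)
Your proposal is correct and follows essentially the same route as the paper: the first statement via Hurewicz applied to the simply connected cover $\til X$ together with Proposition \ref{free}, and the second via the universal coefficient theorem with the $\mathrm{Ext}$ term vanishing because $H_1(\til X,\,\Z)=0$. Your remark that the indexing set may be infinite, forcing a direct product rather than a direct sum, is exactly the point the paper is making as well.
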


\begin{proof} The first statement follows from the Hurewicz theorem as $\til X$ 
is simply connected.

The second statement follows from the universal coefficient theorem. Since 
$H_1(\til{X},\,\mathbb{Z})\,= \,0$, it follows that $H^2(\til{X}, \,
\mathbb{Z})$ is torsion-free. Hence $H^2(\til{X}, \, \mathbb{Z})$ is 
isomorphic to ${\rm Hom}(H_2(\til{X},\, \mathbb{Z}), \Z)$, which is a direct
product of copies of $\Z$, because $H_2(\til{X}, \,
\mathbb{Z})$ is a free abelian group as per the first statement.
\end{proof}

\section{$H_1$--semistability}\label{specs}

\subsection{Second cohomology with local coefficients}

\begin{lemma}\label{rs}
Let $X$ be a smooth complex projective surface with holomorphically
convex universal cover $\til X$. Assume that $G\,:=\, \pi_1(X)$ is 
torsion-free. Let $\phi\,:\,\til{X}\,\longrightarrow\,\til{Y}$ be the 
Cartan--Remmert reduction. Assume that $\til Y$ is an open Riemann 
surface. Then $G$ is isomorphic to the fundamental group of a compact
Riemann surface.
\end{lemma}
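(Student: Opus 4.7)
The plan is to exhibit $\til{Y}$ as the universal cover of a compact Riemann surface on which $G$ acts as deck transformations. By the functoriality of the Cartan--Remmert reduction, the deck action of $G$ on $\til{X}$ descends to a holomorphic $G$-action on $\til{Y}$ with respect to which $\phi$ is equivariant, so my strategy is to verify the following three things: (a) $G$ acts freely and properly discontinuously on $\til{Y}$; (b) $\til{Y}$ is simply connected; (c) the quotient $Y := \til{Y}/G$ is compact. Granting these, $\til{Y} \to Y$ becomes the universal covering, and hence $Y$ is a compact Riemann surface with $\pi_1(Y) = G$, as required.

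For (a), proper discontinuity follows from the properness of $\phi$: any compact $K \subset \til{Y}$ has compact preimage in $\til{X}$, on which the $G$-action is properly discontinuous. For freeness, any $g \in G$ that stabilizes a point $y \in \til{Y}$ preserves the compact fiber $\phi^{-1}(y)$ and acts freely on it (since $G$ acts freely on $\til{X}$); but a properly discontinuous free $\langle g \rangle$-action on a compact set forces $\langle g \rangle$ to be finite, so $g = 1$ by torsion-freeness of $G$. For (c), the descended map $X \to Y$ is proper and surjective, so compactness of $X$ yields compactness of $Y$.

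The main obstacle is (b). Since $\til{Y}$ is complex one-dimensional and $\phi$ is a non-constant holomorphic map between manifolds, the critical value set $D \subset \til{Y}$ of $\phi$ is a proper analytic (hence discrete) subset, by Remmert's proper mapping theorem. Over $\til{Y}^* := \til{Y} \setminus D$, the restriction of $\phi$ is a proper holomorphic submersion, and hence, by Ehresmann's theorem, a smooth fiber bundle with connected compact fibre. The long exact homotopy sequence of this fibration then gives surjectivity of $\pi_1(\til{X}^*) \to \pi_1(\til{Y}^*)$, where $\til{X}^* := \phi^{-1}(\til{Y}^*)$. On the other hand, both $\til{X} \setminus \til{X}^* = \phi^{-1}(D)$ and $\til{Y} \setminus \til{Y}^* = D$ are analytic subsets of complex codimension one, hence real codimension two, in their respective ambient manifolds, so the inclusions induce surjections $\pi_1(\til{X}^*) \twoheadrightarrow \pi_1(\til{X})$ and $\pi_1(\til{Y}^*) \twoheadrightarrow \pi_1(\til{Y})$. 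Chasing these maps through the resulting commutative square and using $\pi_1(\til{X}) = 1$ forces $\pi_1(\til{Y}) = 1$, completing the argument.
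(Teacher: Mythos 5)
Your proof is correct, and it diverges from the paper's at the one genuinely substantive point. The paper disposes of simple connectivity of $\til Y$ by citing Gurjar's result that $\til Y$ is biholomorphic to $\mathbb C$ or to the upper half plane, and then, exactly as you do, observes that torsion-freeness plus properness and equivariance of $\phi$ give a free, properly discontinuous, cocompact holomorphic action of $G$ on $\til Y$, so that $\til Y/G$ is a closed Riemann surface with fundamental group $G$. You instead prove $\pi_1(\til Y)=1$ directly: remove the (discrete) critical value set, apply Ehresmann to get a fiber bundle with compact connected fibers and hence surjectivity of $\pi_1(\til X^*)\to\pi_1(\til Y^*)$, and use that deleting closed analytic subsets of real codimension two does not change surjectivity onto $\pi_1$ of the ambient space; this is the standard argument that a proper surjective holomorphic map with connected fibers is surjective on fundamental groups, and it is sound here since the fibers of a Cartan--Remmert reduction are connected and the critical values form a proper analytic, hence discrete, subset of the curve $\til Y$. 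What your route buys is self-containedness and independence from uniformization; what the paper's citation buys is brevity and the stronger information that $\til Y$ is $\mathbb C$ or $\mathbb H$ (hence that $Y$ has genus $\geq 1$), which is not needed for the stated conclusion. Your verifications of freeness, proper discontinuity and cocompactness are more detailed than the paper's one-line assertion but amount to the same argument; the only cosmetic remark is that in the freeness step you do not actually need the action on the fiber to be free, only that proper discontinuity on the compact invariant set $\phi^{-1}(y)$ forces $\langle g\rangle$ to be finite, whence $g=1$ by torsion-freeness.
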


\begin{proof}
The Riemann surface $\til Y$ must be biholomorphic to 
either the complex plane or the upper half plane \cite[p. 703]{gurj}. 
Since $G$ is torsion-free, and $\phi$ 
is a proper $G$--equivariant map, it follows that $G$ acts on $Y$ freely 
properly discontinuously and cocompactly by holomorphic automorphisms of 
$\til Y$. Hence $\til{Y}/G$ must be a closed Riemann surface.
\end{proof}

\begin{lemma}\label{isom}
Let $X$ be a smooth complex projective minimal surface with holomorphically 
convex universal cover $\til X$. Let $\phi\,:\, \til{X}\,\longrightarrow\, 
\til{Y}$ be the Cartan--Remmert reduction. Assume that $\til Y$ is a complex 
surface. Also, assume that $G\,:=\,\pi_1(X)$ is torsion-free. Then 
$$\phi^\ast\,:\, H^4_c(\til{ X})\,\longrightarrow\, H^4_c(\til{ Y})$$ is an 
isomorphism.
\end{lemma}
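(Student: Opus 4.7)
The plan is to reduce the comparison of $H^4_c(\til X)$ and $H^4_c(\til Y)$ to the common open subset $\til X\setminus F \,\cong\, \til Y\setminus P$ on which $\phi$ restricts to a homeomorphism, using the long exact sequence of compactly supported cohomology to excise the exceptional loci $F$ and $P$.

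From the preceding subsection, $P\subset\til Y$ is the discrete singular set of $\til Y$, while $F \,=\, \phi^{-1}(P)\subset\til X$ is a union of compact irreducible projective curves (in particular a real two-dimensional space), and $\phi\vert_{\til X\setminus F} \,:\, \til X\setminus F \,\longrightarrow\, \til Y\setminus P$ is a homeomorphism. For the closed-open decomposition of $\til X$ into $F$ and $\til X\setminus F$, the long exact sequence of compactly supported cohomology reads
$$\cdots\,\longrightarrow\, H^i_c(\til X\setminus F)\,\longrightarrow\, H^i_c(\til X)\,\longrightarrow\, H^i_c(F)\,\longrightarrow\, H^{i+1}_c(\til X\setminus F)\,\longrightarrow\,\cdots$$
Since $F$ is real two-dimensional, $H^i_c(F)=0$ for $i\geq 3$, and therefore
$$H^4_c(\til X\setminus F)\,\xrightarrow{\sim}\, H^4_c(\til X).$$
Running the same argument on the pair $(\til Y, P)$, and using that $P$ is discrete so that $H^i_c(P)=0$ for $i\geq 1$, yields $H^4_c(\til Y\setminus P)\xrightarrow{\sim} H^4_c(\til Y)$.

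Combining these two isomorphisms with the identification $H^4_c(\til X\setminus F)\,\cong\, H^4_c(\til Y\setminus P)$ induced by the homeomorphism $\phi\vert_{\til X\setminus F}$ produces the required isomorphism. No substantial obstacle appears; the argument is essentially excision. The only subtle point is the direction of the arrow: compactly supported cohomology is contravariant for proper maps, so the natural pullback $\phi^\ast$ points from $H^4_c(\til Y)$ to $H^4_c(\til X)$, and the statement in the lemma should be read through this identification (or as the inverse of the canonical pullback, which exists precisely because the composition above is an isomorphism).
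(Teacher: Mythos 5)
Your argument is correct, and it arrives at the lemma by a cleaner formal route than the paper's. The paper realizes $H^4_c$ as the direct limit of the relative groups $H^4(\til Y;\, \til Y\setminus B_n)$ over a compact exhaustion chosen so that $\partial B_n\cap P\,=\,\emptyset$, observes that adjoining $P$ (resp.\ $F$) to the subspace does not change these groups because $P$ is discrete and $F$ has real dimension two, identifies the two relative groups by a coning-off homotopy equivalence, and then passes to the limit. You instead invoke the long exact sequence of compactly supported cohomology for the closed-open decompositions $(F,\, \til X\setminus F)$ and $(P,\, \til Y\setminus P)$, kill the contribution of the exceptional loci by the same dimension count (what is actually needed is $H^3_c(F)\,=\,H^4_c(F)\,=\,0$ and likewise for $P$, which your vanishing statements cover), and transport the result across the homeomorphism $\phi\vert_{\til X\setminus F}$. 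The underlying idea --- excise the exceptional sets, which are too low-dimensional to affect $H^4_c$ --- is the same; your version dispenses with the choice of exhaustion and the coning argument, at the mild cost of invoking the compactly supported long exact sequence for a closed subspace, which is harmless here since $F$ is a locally finite disjoint union of compact curves and all spaces involved are locally compact and locally contractible. Your closing remark on variance is a genuine catch: $\phi$ is proper, so the canonical pullback runs $\phi^\ast\,:\,H^4_c(\til Y)\,\longrightarrow\,H^4_c(\til X)$, and the arrow in the lemma (repeated in Remark \ref{colln}) is written in the opposite direction; the content is unaffected. To get the statement literally as an assertion about $\phi^\ast$ rather than about some abstract isomorphism, you should add one sentence noting that $\phi$ induces a morphism between the two long exact sequences (it carries $F$ to $P$ and restricts to a homeomorphism of the complements), so the isomorphism you construct is precisely the inverse of the canonical $\phi^\ast$.
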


\begin{proof} As before, $P$ denotes the singular locus of $\til{ Y}$, 
which is a discrete subset of points because $\til{ Y}$ is normal. Let 
$$B_1\,\subset\,B_2\,\subset\,\cdots\,\subset\,B_n\,\subset\,\cdots$$ be a 
(relatively) compact exhaustion of $\til Y$ such that $\partial B_n \cap 
P\,=\, \emptyset$ for all $n$. Then $$H^4 (\til Y;\, (\til Y \setminus B_n) \cup 
P)\,=\, H^4 (\til Y;\, (\til Y \setminus B_n))\, ,$$ because $P\cap B_n$ is a 
finite set of points.

Let $\til{B}_n\,:=\,\phi^{-1} (B_n)$. Then $$\til{B}_1\,\subset\,\til{B}_2\, 
\subset\,\cdots\,\subset\,\til{B}_n \,\subset\, \cdots$$ is a (relatively) 
compact exhaustion of $\til X$. Also, if $F\,=\,\phi^{-1} (P)$, then $$H^4 (\til 
X;\, (\til X \setminus \til{B}_n) \cup F)\,=\, H^4 (\til X;\, (\til X \setminus 
\til{B}_n) )\, ,$$ because $F$ is a union of curves (its real 
dimension is two). 

Further, since $\partial B_n \cap P \,=\, \emptyset$, it can be deduced that
$$H^4 (\til X;\, (\til X \setminus \til{B}_n) \cup F)\,=\, H^4 (\til Y;\, (\til 
Y \setminus B_n) \cup P)\, .$$ Indeed, this is
easily seen from the fact that the space obtained by coning off
$(\til X \setminus \til{B}_n) \cup F$ in $\til X$ is homotopy 
equivalent to the space obtained by coning off
$(\til Y \setminus B_n) \cup P$ in $\til Y$. 

Hence $\phi^\ast$ induces an isomorphism between $H^4 (\til X;\, (\til X 
\setminus \til{B}_n)$ and $H^4 (\til Y;\, (\til Y \setminus B_n))$ for 
all $n$. Now the lemma follows by taking limits.
\end{proof}

A Mayer--Vietoris argument gives the following:

\begin{lemma}
Let $X$ be a smooth complex projective minimal surface with holomorphically
convex universal cover $\til X$. Let $\phi\,:\, \til{X}\,\longrightarrow\, 
\til{Y}$ be the Cartan--Remmert reduction. Assume that the complex 
dimension of $\til Y$ is two. Also assume that $G\,:= \,\pi_1(X)$ is 
torsion-free. Define $Y \,:=\, \til{Y}/G$. Then $H^4(Y) 
\,= \,\Z$.\label{h4y}
\end{lemma}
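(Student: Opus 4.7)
The plan is to compute $H^4(Y)$ by comparing Mayer--Vietoris sequences for $X$ and $Y$ via the morphism $\bar\phi\,:\,X\,\to\,Y$ induced by the $G$-equivariant Cartan--Remmert reduction $\phi\,:\,\til X\,\to\,\til Y$. A preliminary observation is that $G$ acts freely on $\til Y$: since the action of $G$ on $\til X$ is properly discontinuous, the induced action on $\til Y$ is also proper, so every point stabilizer is finite; torsion-freeness of $G$ then forces every stabilizer to be trivial. Thus $Y\,=\,\til Y/G$ is a compact normal complex surface, $P_G\,\subset\,Y$ (the image of the discrete singular locus $P\,\subset\,\til Y$) is a finite set of points, and $F_G\,:=\,\bar\phi^{-1}(P_G)$ is a finite union of compact complex curves inside the smooth closed $4$-manifold $X$.

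Next, I set up compatible open covers. Take a disjoint union $U\,\subset\,Y$ of contractible cone-like neighborhoods of the points of $P_G$, and set $V\,:=\,Y\setminus P_G$; correspondingly put $\til U\,:=\,\bar\phi^{-1}(U)$ and $\til V\,:=\,X\setminus F_G$, so that $\til U$ is a tubular neighborhood of $F_G$ which deformation retracts onto the $2$-real-dimensional complex $F_G$. Since $\phi$ is a biholomorphism away from $F$, the restrictions $\bar\phi\,:\,\til V\,\to\,V$ and $\bar\phi\,:\,\til U\cap\til V\,\to\,U\cap V$ are homeomorphisms. The Mayer--Vietoris sequences for the covers $X\,=\,\til U\cup\til V$ and $Y\,=\,U\cup V$ then sit in a commutative ladder with vertical maps $\bar\phi^\ast$, and in the range of interest one has $H^3(U)\,=\,H^4(U)\,=\,0$ (by contractibility) and $H^3(\til U)\,=\,H^4(\til U)\,=\,0$ (since $\til U$ retracts to a $2$-dimensional complex), while the vertical arrows on $H^\bullet(U\cap V)$ are isomorphisms. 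The five-lemma therefore forces $\bar\phi^\ast\,:\,H^4(Y)\,\to\,H^4(X)$ to be an isomorphism, and $H^4(X)\,\cong\,\Z$ because $X$ is a closed connected orientable $4$-manifold.

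The main obstacle I anticipate is handling $Y$ as a non-smooth space: one must justify that the singular points of $\til Y$ (and hence of $Y$) admit contractible cone-like open neighborhoods, and that the exceptional loci of $\phi$ admit open tubular neighborhoods which deformation retract onto them. Both are standard facts for isolated normal surface singularities and their resolutions (cone on the link, regular neighborhoods of the exceptional divisor), so once these are invoked the rest of the argument reduces to a formal diagram chase with the five-lemma.
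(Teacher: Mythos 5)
Your argument is correct and is essentially the paper's: both exploit that $Y$ is obtained from $X$ by collapsing the finitely many $2$-dimensional exceptional curves to points, and both conclude via Mayer--Vietoris that this cannot affect $H^4$, so $H^4(Y)\,\cong\,H^4(X)\,=\,\Z$. The only cosmetic difference is that the paper runs a single Mayer--Vietoris sequence for the homotopy model of $Y$ as $X$ with cones attached along the collapsed curves, whereas you compare the two Mayer--Vietoris ladders for $X$ and $Y$ and invoke the five-lemma.
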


\begin{proof}
Since $\phi$ is equivariant with respect to the actions of 
$G$ on $\til{X}$ and $\til{Y}$, it follows that $Y$ is obtained from $X$
by collapsing some finitely many complex curves (possibly singular) in 
$X$ to points. Let $A_1\, , \cdots\, ,A_n$ be the connected components 
that are thus collapsed to points. Then $Y$ is homotopy equivalent to 
$X$ with cones $cA_i$ attached to $A_i$, $1\leq i\leq n$.
The cohomological Mayer-Vietoris sequence gives $$H^3(\bigcup_{i=1}^n A_i)\, 
\longrightarrow\,H^4(Y)\,\longrightarrow\,H^4(X) \oplus H^4(\bigcup_{i=1}^n 
cA_i) \,\longrightarrow\, H^4(\bigcup_{i=1}^n A_i)\, . $$
Since each $A_i$ is 2-dimensional, and each $cA_i$ is contractible, it follows 
that $H^4(Y)\,=\, H^4(X)\,=\,\Z$.
\end{proof}

\begin{rmk}\label{colln}
Thus we are reduced to looking at smooth complex projective minimal surfaces $X$ 
such that the Cartan--Remmert reduction $\til Y$ of $\til X$ is a complex 
surface. We collect together the topological facts proven so far:

\begin{enumerate}
\item $H^3(\til{X}) \,= \,H^3(\til{Y})\,=\,0$ (Lemma \ref{inj}).

\item $H^4({X}) \,=\, H^4(Y)\,=\,\Z$ (Lemma \ref{h4y}). 

\item $\phi^\ast\,:\, \Z \,=\, H^4_c(\til{ X})\, \longrightarrow\,
H^4_c(\til{ Y})$ is an isomorphism (Lemma \ref{isom}).

\item $\pi_2(X)$, $\pi_2(Y)$, $H_2(\til{X})$, $H_2(\til{Y})$, are free abelian 
groups (Theorem \ref{free-an},
Proposition \ref{free} and Corollary \ref{cohfree}). Also $H^2 (\til{X})$
and $H^2 (\til{Y})$ are direct products of copies of $\Z$ by Corollary \ref{cohfree}.
\end{enumerate}
\end{rmk}

\subsection{Proof of semistability for holomorphically convex groups}

We now set up a Leray--Serre cohomology spectral sequence for the 
classifying maps $$Y \,:=\, \til{Y}/G \,\longrightarrow\, K(G,1)\ ~
\text{ and } ~ \ X\,\longrightarrow\, K(G,1)$$ for the principal $G$--bundles
$\til{X}\,\longrightarrow\, X$ and $\til{Y}\,\longrightarrow\, Y$
respectively. (See \cite[p. 286]{hu}, \cite[Theorem 2.2]{dyer} and 
\cite[Proposition 1]{klingler} for closely related arguments.)

\begin{prop}\label{leraycoh}
Let $X$ be a smooth complex projective minimal surface with 
holomorphically
convex universal cover $\til X$. Assume that $G\,:=\, \pi_1(X)$ is 
torsion-free, and the Cartan--Remmert reduction $\til Y$ of $\til X$
is a complex surface. Let $R$ be any left $\ZG$--module. Then 
$$H^{p+3}(G,\, R)\,=\,H^p(G,\, H^2(\til{M},\, R))$$ for all $p\,\geq\, 3$.

There is an exact sequence of $G$-modules
$$
0 \,\longrightarrow\, H^2(G,\, R) \,\longrightarrow\, H^2(M,\, R)
\,\longrightarrow\, (H^2(\til{M},\, R))^G \,\longrightarrow\,
H^3(G, \,R) \,\longrightarrow\, H^3(M,\, R) \,\longrightarrow 
$$
$$
H^1(G,\, H^2(\til{M},\, R))
\,\longrightarrow\, H^4(G,\, R) \,\longrightarrow\, H^4(M,\, R) 
\,\longrightarrow \, H^2(G,\, H^2(\til{M},\, R))\,
\longrightarrow\, H^5(G,\, R)\, \longrightarrow\, 0\, .
$$
Here $M$ is $X$ or $Y \,:= \,\til{Y}/G$ and $\til{M}$ is
$\til{X}$ or $\til Y$ respectively.
\end{prop}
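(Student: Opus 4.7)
The plan is to apply the Leray--Serre (equivalently, Cartan--Leray) cohomology spectral sequence associated with the principal $G$--bundle $\til M \to M$, namely
$$E_2^{p,q} \,=\, H^p(G,\, H^q(\til M,\, R)) \,\Rightarrow\, H^{p+q}(M,\, R),$$
and to show that only two of its rows survive on the $E_2$ page. The first step is to verify that $H^1(\til M,\, R) = 0$ (immediate from simple connectedness) and $H^q(\til M,\, R) = 0$ for $q \geq 3$. For $\til M = \til Y$ the latter is immediate from Theorem \ref{stein}, since $\til Y$ is homotopy equivalent to a $2$--complex. For $\til M = \til X$, I would invoke the universal coefficient theorem together with $H_3(\til X) = 0$ (Lemma \ref{inj}), the freeness of $H_2(\til X)$ (Corollary \ref{cohfree}), and the vanishing of $H_q(\til X)$ for $q \geq 4$, which follows from $\til X$ being a non-compact real $4$--manifold.

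Once the $E_2$ page has been reduced to the two rows $q = 0$ and $q = 2$, the differential $d_2$ is automatically zero (the $q = 1$ row is empty), so $E_3 = E_2$. The only potentially nonzero higher differential is the transgression
$$d_3^{p,2} \,\colon\, H^p(G,\, H^2(\til M,\, R)) \,\longrightarrow\, H^{p+3}(G,\, R),$$
and all $d_r$ for $r \geq 4$ vanish by the same row-vanishing analysis. Hence $E_\infty = E_4$ is determined by the kernel and cokernel of $d_3$, and for every $n$ the abutment filtration of $H^n(M,\, R)$ has at most two nonzero graded pieces, giving a short exact sequence
$$0 \,\longrightarrow\, \mathrm{coker}(d_3^{n-3,2}) \,\longrightarrow\, H^n(M,\, R) \,\longrightarrow\, \ker(d_3^{n-2,2}) \,\longrightarrow\, 0.$$

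The first conclusion then follows because $M$ is a compact real $4$--manifold, so $H^n(M,\, R) = 0$ for $n \geq 5$; this forces $d_3^{p,2}$ to be surjective for $p \geq 2$ and injective for $p \geq 3$, hence an isomorphism for $p \geq 3$. The long exact sequence of the second conclusion is obtained by splicing the three short exact sequences for $n = 2, 3, 4$ together along the maps $d_3^{p,2}$, which play the role of the connecting homomorphisms; the sequence terminates in $H^5(G,\, R) \to 0$ because $d_3^{2,2}$ is surjective.

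The main obstacle will be the bookkeeping at the very end: identifying each connecting homomorphism between adjacent short exact sequences with the appropriate $d_3^{p,2}$, and carefully checking exactness at each term of the resulting ten-term sequence. A subordinate but routine point is ensuring that the vanishings $H^q(\til M,\, R) = 0$ for $q \geq 3$ persist for arbitrary $\ZG$--module coefficients $R$; this is clean because $H_2(\til M)$ is free abelian, so that no $\mathrm{Ext}$ term appears in the universal coefficient formula.
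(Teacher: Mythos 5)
Your proposal is correct and follows essentially the same route as the paper: the same Cartan--Leray/Leray--Serre spectral sequence for $\til M \to M$, reduction to the two rows $q=0,2$, the observation that $d_3$ is the only possibly nonzero differential, and the splicing of the resulting short exact sequences together with $H^n(M,R)=0$ for $n\geq 5$. If anything, your verification that $H^q(\til X, R)=0$ for $q\neq 0,2$ with arbitrary coefficients $R$ (via the freeness of $H_2(\til X)$ killing the $\mathrm{Ext}$ term) is spelled out more explicitly than in the paper, which only records the vanishing for $\til Y$ with integer coefficients.
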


\begin{proof}
Let $\til{M}\,\longrightarrow \,M$ be a principal $G$--bundle. Take
a $K(G,1)$ space $K$; its universal cover $\widetilde K$ is 
contractible. Let $f\,:\, M\,\longrightarrow\, K$ 
be a classifying map. Let $$\til{g}\,:\, \til{M} \times 
\til{K}\,\longrightarrow\, \til{K}$$
be the natural projection. The group $G$ acts on
$\til K$ through deck transformations, and it acts on
$\til{M} \times \til{K}$ via the diagonal action.
Since $\til g$ is equivariant with respect to these
actions, it induces a map
$$
g\,:\, W\, :=\, (\til{M}\times\til{K})/G\, \longrightarrow \,
{\widetilde K}/G \,=\, K\, .
$$
The fibers of $g$ are homotopy equivalent to $\til M$
(see \cite[pp. 285--286]{hu} for more details).

Note that $H^3(\til{Y})\,= \,0\,= \,H^4(\til{Y})$ by Theorem \ref{stein}. Also,
$\pi_1(\til{Y})\,=\, H_1(\til{Y})\,=\,0$ 
and so $H^1(\til{Y})\,=\,0$ by the universal coefficient theorem. Hence 
$H^i(\til{Y}) \,=\, 0$ for $i \,\neq\, 0, 2$. 

The Leray--Serre cohomology spectral sequence for the above fibration with 
local coefficients~$R$ gives
$$H^p(K,\, (H^q(\til{M} ,\, R)) \,\Longrightarrow \,H^{p+q}(M,\, R)\, ,$$ and 
hence $$H^p(G,\, (H^q(\til{M},\, R))\,\Longrightarrow\, H^{p+q}(M,\, R)$$
since $K$ is a $K(G,1)$ space.

As $H^i(\til{Y}) \,=\, 0$, $i \,\neq\, 0,\, 2$, it follows that $E_2^{p, 0}\,
=\,H^p(G,\, R)$ and $ E_2^{p,2}\,=\, H^p(G,\, H^2(\til{M},\, R))$ are 
the only (possibly) non-zero $E_2^{p, q}$ terms. Since $E_2^{p, 1}\,=\,0$, the 
differential $d_2\,=\,0$. Also, the differentials $d_i$ are zero for 
$i\,>\,3$. Thus $d_3$ is the only (possibly) non-zero differential.
Hence $$E_3\,=\,E_2 ~\, \text{ and }\, ~E_4\,=\,E_5\,=\,\cdots\,= \,
E_{\infty}\, ,$$ and also
\begin{itemize}
\item $E_{\infty}^{0, 0} \,=\, H^0(G,\, H^0(\til{M},\, R))\,=\,
H^0(G,\, R)\,=\,R^G$ (see \cite[p. 58]{brown} for instance), 

\item $E_{\infty}^{1,0}\,=\, H^1(G,\, H^0(\til{M},\, R))\,=\, H^1(G,\,R)$,

\item $E_{\infty}^{2,0}\,=\,H^2(G,\,H^0(\til{M},\, R))\,=\,H^2(G,\,R)$,

\item $E_{\infty}^{p, q}\,=\,H^p(G,\, H^q(\til{M},\, R))\,=\,H^p(G,\, 0)\,
=\,0$, for $q\, \neq\, 0,2$.
\end{itemize}

Further, we have the following exact sequences:
\begin{itemize} 
\item $0\,\longrightarrow\, E_{\infty}^{0,2}\,\longrightarrow\, H^2(\til{M},
\,R)^G \,\stackrel{d_3}{\longrightarrow}\,
H^3(G,\, R)) \,\longrightarrow\, 0$,

\item $0\,\longrightarrow\, H^{p-3}(G,\, H^2(\til{M},\, R))\,
\stackrel{d_3}{\longrightarrow}\, H^p(G,\, R)\,\longrightarrow\,
E_{\infty}^{p, 0}\,\longrightarrow\, 0$, for all $p \,\geq\, 3$, and

\item $0\,\longrightarrow\, E_{\infty}^{p, 2}\,\longrightarrow\, H^p(G,\,
H^2(\til{M},\, R))\,\stackrel{d_3}{\longrightarrow}\,
H^{p+3}(G,\, R)\,\longrightarrow\, 0$, for all $p\,\geq\, 1$.
\end{itemize}

The above descriptions of the $E_{\infty}^{p,q}$ terms can be assembled
to produce the following two exact sequences for the fibration:
$$
0\,\longrightarrow\,H^2(G,\,R)\,\longrightarrow\,H^2(M,\, R)\,\longrightarrow
\,(H^2(\til{M},\, R))^G\,\stackrel{d_3}{\longrightarrow}\, H^3(G,\, R))
$$
(assembling $E_{\infty}^{0,2}$ and $E_{\infty}^{0,2}$), and
$$
H^{p-3}(G,\,H^2(\til{M},\,R))\,\stackrel{d_3}{\longrightarrow}\, H^p(G,\,
R)\,\longrightarrow \,H^p(M,\,R)\,\longrightarrow\, H^{p-2}(G,\,
H^2(\til{M},\, R))
$$
$$
\stackrel{d_3}{\longrightarrow}\, H^{p+1}(G,\, R))~\,\text{ for all }\, p
\,\geq\, 3\, .
$$

Since $H^p(M, R)\,=\,0$ for all $p\,>\,4$, we immediately get from
the above second exact sequence that
$$H^{p+3}(G,\, R)\,=\, H^p(G, \,H^2(\til{M},\, R))$$ for all $ p\,\geq \,3$.
Also, concatenating the first exact sequence with the second
exact sequence for $p\,=\,3,4,5$, we get the long exact sequence in
the proposition.
\end{proof}

Here, as in what follows, we use 
the left $G$--module structure on $\ZG$ to define $H^k(X,\, \ZG)$ and the right
$G$--module structure on $\ZG$ to give its $G$--module structure.

The following is a consequence of Proposition \ref{leraycoh}.

\begin{cor}\label{h2}
Let $X$ be a smooth complex projective minimal surface with holomorphically
convex universal cover $\til X$. Let the Cartan--Remmert reduction $\til Y$ of $\til X$
be a complex surface. Assume that $G \,=\, \pi_1(X)$ 
has dimension less than four. Then $$H^4(M,\, R) \,=\, H^{2}(G,\,H^2(\til{M} , 
\, R))\, .$$ In particular,
\begin{enumerate}
\item \, $\Z \,=\, H^4(M,\, \Z) \,= \,H^{2}(G,\,H^2(\til{M}))$, and
\item \, $\Z \,=\, H^4_c(\til{M},\,\Z) \,=\, H^4(M, \,p_!\Z) \,=
\,H^{2}(G,\,H^2(\til{M},\, p^\ast p_!\Z))$.
\end{enumerate}
Here $M$ is $X$ or $Y \,:= \,\til{Y}/G$ and $\til{M}$ is
$\til{X}$ or $\til Y$ respectively and $p: \til{M} \longrightarrow M$ denotes the (universal) covering map.
\end{cor}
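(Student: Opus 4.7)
The plan is to deduce the corollary directly from the long exact sequence established in Proposition~\ref{leraycoh}, by exploiting the cohomological-dimension hypothesis. Since the integral cohomological dimension of $G$ is strictly less than four, by definition $H^k(G,\,R)\,=\,0$ for every $\ZG$-module $R$ and every $k\,\geq\,4$; in particular $H^4(G,\,R)\,=\,0\,=\,H^5(G,\,R)$. This is the only input from the new hypothesis; everything else is already available.

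The main step is to read off the tail of the nine-term sequence of Proposition~\ref{leraycoh},
$$
H^4(G,\,R)\,\longrightarrow\,H^4(M,\,R)\,\longrightarrow\,H^2(G,\,H^2(\til M,\,R))\,\longrightarrow\,H^5(G,\,R)\,\longrightarrow\,0.
$$
With both outer terms zero, the middle arrow is forced to be an isomorphism, yielding the general formula $H^4(M,\,R)\,=\,H^2(G,\,H^2(\til M,\,R))$. For item~(1), substitute $R\,=\,\Z$ with trivial $G$-action; the middle identification $H^4(M,\,\Z)\,=\,\Z$ is Remark~\ref{colln}(2). For item~(2), substitute $R\,=\,p_!\Z$, the $\ZG$-local system on $M$ associated to the universal covering $p\,:\,\til M\,\longrightarrow\,M$; here $p^\ast p_!\Z$ is the trivial rank-one local system on the simply connected space $\til M$ carrying the regular $\ZG$-module structure coming from deck transformations. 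The identification $H^\ast(M,\,p_!\Z)\,=\,H^\ast_c(\til M,\,\Z)$ is the standard Shapiro-type relation for a Galois cover with compact base, and the value $\Z$ in degree four is supplied by Remark~\ref{colln}(3), which itself rests on Lemma~\ref{isom}.

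No step is genuinely difficult: the substantive work was performed in the proof of Proposition~\ref{leraycoh} and in Section~\ref{shaf}, and the corollary amounts to plugging in two coefficient systems. The only point requiring a moment of care is checking compatibility between the local-system conventions of the Leray--Serre spectral sequence used in Proposition~\ref{leraycoh} and the standard identification $H^\ast(M,\,p_!\Z)\,=\,H^\ast_c(\til M,\,\Z)$; this compatibility is intrinsic to the Leray--Serre setup and poses no obstacle.
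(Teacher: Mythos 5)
Your proposal is correct and follows essentially the same route as the paper: the authors likewise kill $H^4(G,\,R)$ and $H^5(G,\,R)$ using the dimension hypothesis, read the isomorphism off the tail of the exact sequence in Proposition~\ref{leraycoh}, and supply the values $\Z$ via Lemma~\ref{h4y} and Lemma~\ref{isom} (the facts you cite through Remark~\ref{colln}).
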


\begin{proof} Lemma \ref{h4y} gives that $H^4(X,\, \Z)\,=\, \Z
\,=\, H^4(Y,\, \Z)$, while Lemma \ref{isom} gives that $H^4(X,\, p_!\Z)\,=\, \Z
\,=\, H^4(Y,\, p_!\Z)$.

The rest follows from the exact sequence in Proposition \ref{leraycoh},
putting $H^4(G,\, R)\,=\,H^5(G,\, R)\,=\,0$.
\end{proof}

We are now in a position to prove $H_1-$semistability for holomorphically
convex groups. 

\begin{theorem}
Let $G\,=\, \pi_1(X)$ be a torsion-free group that is the fundamental group of a 
smooth complex projective variety $X$ with holomorphically
convex universal cover. Then there exists an exact sequence of $G$-modules
$$0\longrightarrow H^2(G,\, \ZG)\,\longrightarrow\, \pi_2(X)\,\longrightarrow\, 
{\rm Hom}_G(\pi_2(X),\, \ZG)\,\longrightarrow\, H^3(G,\, \ZG)\,\longrightarrow\,
0\, .$$ It follows that $H^2(G, \, \ZG)$ is a free abelian group. \label{rest}
\end{theorem}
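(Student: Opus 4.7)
The plan is to apply Proposition \ref{leraycoh} with coefficients $R = \ZG$ and then identify every term of the resulting long exact sequence, via Poincar\'e--Lefschetz duality on $\til X$ together with the simple connectivity of $\til X$. As a preliminary reduction, Proposition \ref{hchid} lets one assume that $X$ is a smooth complex projective surface; passing to its minimal model preserves both $\pi_1$ and the holomorphic convexity of the universal cover, so $X$ may further be taken minimal. The Cartan--Remmert reduction $\til Y$ of $\til X$ is then of complex dimension one or two. If $\til Y$ is a Riemann surface, Lemma \ref{rs} identifies $G$ with a closed surface group, so $G$ is either trivial or a Poincar\'e-duality group of dimension two and $H^2(G,\ZG)$ is already $0$ or $\Z$; the substantive case is thus the other one.

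Assuming $\til Y$ is a complex surface, I would feed $M = X$ and $R = \ZG$ into Proposition \ref{leraycoh}, yielding
\[
0 \to H^2(G,\ZG) \to H^2(X,\ZG) \to H^2(\til X,\ZG)^G \to H^3(G,\ZG) \to H^3(X,\ZG) \to \cdots.
\]
Two identifications transform this into the sequence claimed by the theorem. First, because local-coefficient cochains on $X$ valued in $\ZG$ coincide with $G$-equivariant integer cochains on $\til X$, we have $H^k(X,\ZG) \cong H^k_c(\til X,\Z)$; Poincar\'e--Lefschetz duality on the oriented real $4$-manifold $\til X$ then gives $H^k_c(\til X,\Z) \cong H_{4-k}(\til X,\Z)$, and together with Hurewicz and Proposition \ref{free} this yields $H^2(X,\ZG) \cong \pi_2(X)$ and $H^3(X,\ZG) = H_1(\til X) = 0$. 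Second, since $\til X$ is simply connected with free second homology, the universal coefficient theorem identifies $H^2(\til X,\ZG) \cong \mathrm{Hom}(\pi_2(X),\ZG)$, and the diagonal $G$-invariants are exactly $\mathrm{Hom}_G(\pi_2(X),\ZG)$.

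Substituting these identifications and truncating by means of $H^3(X,\ZG) = 0$ yields the stated four-term exact sequence. From it, $H^2(G,\ZG)$ embeds into $\pi_2(X)$, which is free abelian by Proposition \ref{free}, so $H^2(G,\ZG)$ is itself free abelian. I expect the main obstacle to lie not in the cohomological bookkeeping but in verifying that the Poincar\'e-duality and universal-coefficient isomorphisms above are $G$-equivariant with respect to the authors' conventions (the left $G$-module structure on $\ZG$ used to define the local system, and the right $G$-action giving the external $G$-module structure on the result). Once this $G$-equivariance is in place, the four-term sequence is immediate and the freeness of $H^2(G,\ZG)$ follows at once.
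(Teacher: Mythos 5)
Your proposal follows essentially the same route as the paper: apply Proposition \ref{leraycoh} with $R=\ZG$, identify $H^2(X,\ZG)$ with $H^2_c(\til X,\Z)\cong H_2(\til X)\cong \pi_2(X)$ via Poincar\'e duality and Hurewicz, observe $H^3(X,\ZG)=H_1(\til X)=0$, identify $(H^2(\til X,\ZG))^G$ with ${\rm Hom}_G(\pi_2(X),\ZG)$ by the universal coefficient theorem, and deduce freeness from Proposition \ref{free}. The only real difference is that you explicitly dispose of the case where the Cartan--Remmert reduction is a curve (via Lemma \ref{rs}), a case the paper's proof passes over even though it is excluded by the hypotheses of Proposition \ref{leraycoh}; that extra care is welcome but does not change the argument.
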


\begin{proof}
By the Lefschetz hyperplane Theorem, we can assume, without loss of generality that $X$
is a surface. From Proposition \ref{leraycoh}, there is an exact sequence of $G$-modules,
$$
0 \,\longrightarrow\, H^2(G,\, \ZG) \,\longrightarrow\, H^2(X,\, p_!\Z)
\,\longrightarrow\, (H^2(\til{X},\, p^\ast p_!\Z))^G \,\longrightarrow\,
H^3(G, \,\ZG) \,\longrightarrow\, H^3(X,\, p_!\Z)\, .
$$

The first statement of the proposition follows from the following sequence of 
observations.\\
1) We have $H^2(X,\, p_!\Z)\,=\,H^2_c(\til{X},\, \Z)\,=\,H_2(\til{X},\, \Z) 
\,=\, \pi_2(X),$
where the first equality is the standard interpretation for cohomology with $p^\ast p_!\Z$ (isomorphic to $\ZG$) coefficients (see \cite[p. 209]{brown} for instance),
the second equality follows from Poincar\'e duality applied to $\til X$, and 
the third equality follows from the Hurewicz Theorem.\\
2) We have $H^3(X,\, p_!\Z) \,=\, H^3_c(\til{X},\, \Z)\,=\,H_1(\til{X},\, 
\Z)\,= \, 0$, by a similar argument.\\
3) Next, $H^2(\til{X},\, p^\ast p_!\Z) \,= \, {\rm Hom}(H_2(\til{X}),\, \ZG)\,=\, 
{\rm Hom}(\pi_2(X),\, \ZG)$.\\
4) Finally, ${\rm Hom}(M,\,N)^G \,=\, {\rm Hom}_G(M,\,N).$

In particular, $H^2(G,\, \ZG)$ injects into $\pi_2(X)$ which is free abelian by 
Proposition \ref{free}. The second statement of the proposition follows.
\end{proof}

As a consequence of Theorem \ref{rest} we deduce $H_1$--semistability as
it is usually defined (cf. \cite{geogh}). Theorem 13.3.3 of \cite{geogh} combined with
Theorem \ref{rest} gives:

\begin{cor} \label{ss} Let $G\,=\, \pi_1(X)$ be a torsion-free group that is the
fundamental group of a smooth complex projective surface $X$ with holomorphically
convex universal cover. Let $K_i$ be a (filtered) CW exhaustion of $\til X$. Then 
the sequence $\{ H_{1} ((\til{X} \setminus K_i)_{CW},\, \Z)\}_{i\geq 0}$ is semistable.
\end{cor}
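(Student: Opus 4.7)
The plan is to deduce Corollary \ref{ss} directly by combining Theorem \ref{rest} with the characterization of $H_1$-semistability given in \cite[Theorem 13.3.3]{geogh}. Theorem \ref{rest} supplies the algebraic output $H^2(G,\,\ZG)$ is free abelian, while Geoghegan's theorem provides the precise dictionary translating this algebraic statement into the topological semistability of the inverse sequence $\{H_1((\til{X}\setminus K_i)_{CW},\,\Z)\}_{i\geq 0}$ at infinity.

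The verification proceeds in two short steps. First, I would observe that since $X$ is a compact smooth complex projective surface, it admits the structure of a finite CW complex with $\pi_1(X)=G$; in particular $G$ is finitely presented and $X$ is a finite model of the kind required by Geoghegan's framework, and any filtered CW exhaustion $\{K_i\}$ of $\til{X}$ is admissible as input. Second, I would invoke \cite[Theorem 13.3.3]{geogh}, which states that for such a finite CW model the inverse sequence $\{H_1((\til{X}\setminus K_i)_{CW},\,\Z)\}_{i\geq 0}$ is semistable if and only if $H^2(G,\,\ZG)$ is free abelian. Applying Theorem \ref{rest} then yields the claimed semistability.

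The main obstacle, conceptually, is already behind us: all the genuine geometric and cohomological content is concentrated in Theorem \ref{rest} (and through it in Proposition \ref{leraycoh}, Proposition \ref{free}, and the Cartan--Remmert analysis of Section \ref{shaf}). What remains is a direct citation. The only minor checks are confirming finite presentability of $G$ and that $X$ serves as a valid finite complex realizing $G$, both of which are immediate from compactness of $X$, together with the verification that Geoghegan's theorem applies irrespective of the number of ends of $G$ (if $G$ has finitely many ends other than one, the statement is still meaningful and semistability follows from freeness of $H^2(G,\,\ZG)$ in the same way).
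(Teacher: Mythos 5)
Your proposal is correct and follows exactly the paper's route: the authors also derive Corollary \ref{ss} by citing Theorem 13.3.3 of \cite{geogh} in combination with Theorem \ref{rest}, with no additional argument. The extra remarks you make about finite presentability of $G$ and the admissibility of the CW exhaustion are harmless elaborations of the same one-line deduction.
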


The following theorem of Eyssidieux, Katzarkov,
Pantev and Ramachandran \cite{ekpr} (see also \cite{katram, eyssi}) shall be 
needed in obtaining a useful Corollary:

\begin{theorem}[\cite{ekpr}]\label{linsc}
Let $X$ be a smooth complex projective variety such that its fundamental
group $\pi_1(X)$ is linear (meaning a subgroup of ${\rm GL}(n,{\mathbb C})$
for some $n$). Then the universal cover $\til X$ of $X$ 
is holomorphically convex.
\end{theorem}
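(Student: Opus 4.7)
This is a deep theorem of Eyssidieux--Katzarkov--Pantev--Ramachandran and its proof relies on non-abelian Hodge theory rather than classical complex analysis. The overall strategy is, for a faithful linear representation $\rho\,:\,\pi_1(X)\,\hookrightarrow\,\mathrm{GL}(n,\mathbb{C})$, to manufacture enough holomorphic functions on $\til X$ to produce a proper holomorphic surjection onto a Stein space; this will be the Shafarevich morphism attached to $\rho$, and holomorphic convexity of $\til X$ is immediate from its existence.

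First I would reduce to the case that the Zariski closure of $\rho(\pi_1(X))$ is reductive. The unipotent radical is handled separately by variations of mixed Hodge structure, or, inductively, by filtering the representation and analysing its successive semisimple subquotients. For a reductive $\rho$, Corlette's existence theorem together with Simpson's non-abelian Hodge correspondence produces a $\rho$-equivariant harmonic map $\til X \,\longrightarrow\, \mathrm{GL}(n,\mathbb{C})/U(n)$ and a corresponding Higgs bundle on $X$. The Higgs field, its spectral cover, and, when $\rho$ underlies a complex variation of Hodge structure, the associated period map, then provide holomorphic maps $\til X \,\longrightarrow\, D$ into period domains from which pull-back yields many holomorphic functions on $\til X$.

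Second, to capture the finite-monodromy locus at non-archimedean places, I would invoke Gromov--Schoen to produce $\rho$-equivariant pluriharmonic maps from $\til X$ to the Bruhat--Tits buildings attached to all non-archimedean completions of the field of definition of $\rho$. Combining the archimedean and non-archimedean period/harmonic data and Stein-factorising the resulting holomorphic map contracts precisely the maximal analytic subvarieties of $\til X$ along which $\rho$ has finite image. This produces a proper holomorphic map $\til X \,\longrightarrow\, Z$.

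The main obstacle, which is essentially the entire content of \cite{ekpr}, is proving that the target $Z$ so constructed is actually a Stein space. This requires Mok-type factorisation theorems for pluriharmonic maps, Katzarkov--Zuo style extensions of the Bogomolov vanishing theorem to rule out compact positive-dimensional analytic subvarieties of $\til X$ outside the already-contracted fibres, and delicate $L^2$-Hodge theoretic input on Kähler groups. Once Steinness of $Z$ has been verified, the proper map $\til X \,\longrightarrow\, Z$ is the Cartan--Remmert reduction of $\til X$, and $\til X$ is holomorphically convex by definition.
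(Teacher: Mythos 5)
The paper does not prove this statement at all: Theorem \ref{linsc} is imported verbatim from \cite{ekpr} (the Linear Shafarevich Conjecture paper of Eyssidieux--Katzarkov--Pantev--Ramachandran) and is used as a black box, so there is no internal proof to compare yours against. Judged on its own terms, your write-up is a broadly accurate roadmap of the actual EKPR argument --- reduction to the reductive case (with the unipotent part handled by variations of mixed Hodge structure), Corlette--Simpson theory and equivariant harmonic maps at the archimedean places, Gromov--Schoen pluriharmonic maps to Bruhat--Tits buildings at the non-archimedean places, assembling these into the Shafarevich morphism, and then the hard verification that its target is Stein.

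However, it is a roadmap and not a proof. Every load-bearing step is asserted rather than established: you do not construct the Shafarevich morphism, you do not explain how the archimedean and non-archimedean data are actually combined into a single proper holomorphic map, and you explicitly concede that Steinness of the target $Z$ --- which you correctly identify as ``essentially the entire content'' of \cite{ekpr} --- is left unverified. There is also a substantive point your sketch glosses over: properness of the map $\til X \longrightarrow Z$ is not automatic from Stein factorisation and requires ruling out noncompact connected components of fibres, which is part of what the Katzarkov--Zuo/Bogomolov-type vanishing arguments are needed for. In the context of this paper the correct move is simply to cite \cite{ekpr}, as the authors do; if you intend your text as an actual proof, essentially all of the mathematics remains to be supplied.
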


Theorem \ref{rest} and Theorem \ref{linsc} together immediately give the 
following: 

\begin{cor}\label{lincd2cor}
Let $G$ be a linear torsion-free projective group. Then $H^2(G, 
\, \ZG)$ is a free abelian group.
\end{cor}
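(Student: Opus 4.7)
The plan is essentially to concatenate the two previously established theorems, since Corollary \ref{lincd2cor} is a direct consequence of Theorem \ref{rest} combined with Theorem \ref{linsc}. Given a linear torsion-free projective group $G$, by definition $G \,=\, \pi_1(X)$ for some smooth complex projective variety $X$, and $G$ embeds in $\mathrm{GL}(n,\mathbb{C})$ for some $n$.

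First I would invoke Theorem \ref{linsc} of Eyssidieux--Katzarkov--Pantev--Ramachandran: since $\pi_1(X)$ is linear, the universal cover $\til X$ is holomorphically convex. Thus $G$ qualifies as a torsion-free holomorphically convex group in the sense defined in Section \ref{shaf}.

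Next I would apply Theorem \ref{rest} directly to $G\,=\,\pi_1(X)$. The hypotheses there are exactly that $G$ be torsion-free and that it be realized as the fundamental group of a smooth complex projective variety with holomorphically convex universal cover; both are in hand. The conclusion is that $H^2(G,\, \ZG)$ is free abelian, which is the statement of the corollary.

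There is no real obstacle: all the work has already been done in proving Theorem \ref{rest} (which required the spectral sequence analysis of Proposition \ref{leraycoh}, the freeness of $\pi_2(X)$ from Proposition \ref{free}, and the reduction to minimal surfaces), and the linearity hypothesis serves only as a way to verify holomorphic convexity of $\til X$ via Theorem \ref{linsc}. The only thing worth being careful about is the implicit use of Proposition \ref{hchid} to reduce to a smooth projective surface inside the proof of Theorem \ref{rest}, but this is already handled there via the Lefschetz hyperplane theorem, so the corollary follows immediately.
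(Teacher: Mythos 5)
Your proposal is correct and is exactly the paper's argument: the authors also obtain Corollary \ref{lincd2cor} by applying Theorem \ref{linsc} to see that $\til X$ is holomorphically convex and then invoking Theorem \ref{rest}. Nothing further is needed.
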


\subsection{Duality groups}\label{dpd}

We refer to \cite{be} (see also \cite[p. 220]{brown}) for details on duality groups. In this Section, we deduce (as a consequence of
Theorem \ref{rest}) that the dualizing module for a 2-dimensional holomorphically convex group is free abelian as a group.
It is unknown \cite{brown}[p. 224] if this is always the case for duality groups.

\begin{theorem}[Bieri--Eckmann] \label{dual}
A group $G$ is a duality group of dimension $n$ if it satisfies one
the following two equivalent conditions:
\begin{itemize}
\item There exists a $\ZG$-module $I$ (called the {\bf dualizing module}) such 
that for any $\ZG$-module $A$, there is an isomorphism induced by cap-product
with a fundamental class: $$H^i(G,\, A) \,\simeq\, H_{n-i}(G,\, I
\otimes_\Z A)\, .$$
\item $G$ is of type $FP$ and 
$$H^i(G, \,\ZG) \,= \begin{cases}
0, \quad \text{for} \; i \not=n\, , \\
I, \quad \text{for} \; i =n\, .
\end{cases}
$$
\end{itemize}
If the equivalent conditions hold, then $I$ is isomorphic to $H^n(G,\, \ZG)$ as 
a $\ZG$-module and is torsion-free as an abelian group.
\end{theorem}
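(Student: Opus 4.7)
The plan is to prove the equivalence of (i) and (ii) by the standard cap-product / projective-resolution machinery, and then to deduce the supplementary assertions. Both implications rest on a single observation which I will call the \emph{twist trick}: for any $\ZG$-module $M$, the tensor product $M \otimes_\Z \ZG$ equipped with the diagonal $G$-action is isomorphic, as a left $\ZG$-module, to $M_0 \otimes_\Z \ZG$ with $G$ acting only on the second factor, via $m \otimes g \,\mapsto\, g^{-1}m \otimes g$, where $M_0$ denotes the underlying abelian group of $M$. In particular the latter is a free left $\ZG$-module.

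First I would prove (i) $\Rightarrow$ (ii). Substituting $A\,=\,\ZG$ into the duality isomorphism of (i) and applying the twist trick with $M\,=\,I$, the right-hand side reduces to $H_{n-i}(G,\, \bigoplus_J \ZG)$, where $J$ is a $\Z$-basis of $I$. Since $\ZG$ is free as a left module over itself, $H_k(G,\,\ZG)$ equals $\Z$ for $k\,=\,0$ and vanishes otherwise. Granting that group homology commutes with the direct sum (the place where an $FP$-type hypothesis enters and must be extracted from the strong duality in (i) by analysing vanishing of $H^i(G,\,-)$ on filtered colimits), this forces $H^i(G,\,\ZG)\,=\,0$ for $i \,\neq\, n$ and $H^n(G,\,\ZG)\,=\,I$.

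Next, for (ii) $\Rightarrow$ (i), take a length-$n$ resolution $P_\bullet \,\to\, \Z$ by finitely generated projective $\ZG$-modules provided by $FP$ (the length being $n$ because $H^i(G,\,\ZG)$ vanishes for $i\,>\,n$). The dualised complex $\mathrm{Hom}_{\ZG}(P_\bullet,\, \ZG)$ has cohomology equal to $H^\bullet(G,\, \ZG)$, hence by (ii) equal to $I$ concentrated in degree $n$; reversing indices gives a finite projective resolution $Q_\bullet \,\to\, I$. A fundamental class $[G] \,\in\, H_n(G,\, I)$ is defined as the image of $\mathrm{id}_I$ under a natural identification $\mathrm{End}_{\ZG}(I) \,\cong\, H_n(G,\, I)$ read off from the two resolutions, and cap product with $[G]$ yields a natural transformation $H^i(G,\, A) \,\longrightarrow\, H_{n-i}(G,\, I \otimes_\Z A)$. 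The previous paragraph shows this transformation is an isomorphism at $A\,=\,\ZG$, and since every $\ZG$-module admits a resolution by induced modules $\ZG \otimes_\Z A_0$, a dimension-shift / $\delta$-functor comparison propagates the isomorphism to all $A$.

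The main technical obstacle is the clean construction of the fundamental class together with the verification that the chain-level cap product makes both sides into effaceable $\delta$-functors to which the comparison theorem applies; this is a genuine but standard bookkeeping task. Once (i) $\Leftrightarrow$ (ii) is in hand, the closing claims are immediate: the isomorphism $I \,\cong\, H^n(G,\, \ZG)$ was obtained above, and torsion-freeness of $I$ follows from the short exact sequence $0 \,\to\, \ZG \,\xrightarrow{m}\, \ZG \,\to\, (\Z/m)G \,\to\, 0$ by applying the duality with $A\,=\,(\Z/m)G$ and again the twist trick: one finds $H^{n-1}(G,\, (\Z/m)G) \,=\, H_1(G,\, I \otimes_\Z (\Z/m)G) \,=\, 0$, so the connecting map in the long exact sequence vanishes, forcing multiplication by $m$ to be injective on $I \,=\, H^n(G,\, \ZG)$.
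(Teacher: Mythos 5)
First, note that the paper offers no proof of this statement: it is quoted from Bieri--Eckmann \cite{be} (see also \cite[Ch.~VIII]{brown}) and used as a black box, so your attempt can only be measured against the standard argument in those references. Your outline does follow that standard line (dualize a finite projective resolution of $\Z$, use the untwisting isomorphism $M\otimes_\Z \ZG\cong M_0\otimes_\Z\ZG$ together with Shapiro's lemma, then a $\delta$-functor comparison), but two steps as written do not work. In the direction (i)~$\Rightarrow$~(ii) you replace $I\otimes_\Z\ZG$ by $\bigoplus_J\ZG$ with $J$ a $\Z$-basis of $I$; this presupposes that $I$ is \emph{free} abelian, which is not available at that stage --- indeed the freeness of the dualizing module is exactly the open question \cite[p.~224]{brown} that motivates Proposition~\ref{idcor0} of this paper. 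What you actually need, and what suffices, is that $I_0\otimes_\Z\ZG$ is an induced module, hence $H_*$-acyclic in positive degrees with $H_0=I_0$ by Shapiro's lemma; no freeness (or even torsion-freeness) of $I$ is required there.

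The more serious problem is a circularity in (ii)~$\Rightarrow$~(i). Your $\delta$-functor comparison must verify that cap product is an isomorphism on enough modules to start the dimension shift, namely on all induced modules $\ZG\otimes_\Z A_0$, not just on $\ZG$ itself. Taking $A_0=\Z/m$, the long exact sequence for $0\to\ZG\xrightarrow{\,m\,}\ZG\to(\Z/m)G\to 0$ together with $H^{n-1}(G,\ZG)=0$ gives $H^{n-1}(G,(\Z/m)G)\cong{}_m\bigl(H^n(G,\ZG)\bigr)$, the $m$-torsion of $I$, while the corresponding target $H_1(G,I\otimes_\Z(\Z/m)G)$ vanishes by Shapiro. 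So the comparison already requires $I$ to be $\Z$-torsion-free --- precisely the fact you postpone to the end and then deduce \emph{from} the duality isomorphism you are trying to establish. This is why standard formulations (e.g.\ \cite[Ch.~VIII, \S 10]{brown}) either include torsion-freeness of $H^n(G,\ZG)$ in the cohomological characterization or prove it before running the comparison. To repair your argument you should first establish the ``Tor form'' $H^k(G,A)\cong\mathrm{Tor}^{\ZG}_{n-k}(I,A)$ directly from the dualized resolution (which needs no flatness of $I$), settle torsion-freeness, and only then convert Tor into $H_{n-k}(G,I\otimes_\Z A)$. The remaining gap you acknowledge --- extracting type $FP$ and finite cohomological dimension from condition (i) via compatibility of $H^*(G,-)$ with filtered colimits --- is a genuine piece of work (Bieri's finiteness criteria), not mere bookkeeping.
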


As a consequence of Theorem \ref{rest} and Theorem \ref{dual} we have
the following:

\begin{prop}\label{idcor}
Let $G$ be a holomorphically convex group of dimension two. Then $G$ is a duality
group with free dualizing module.
\end{prop}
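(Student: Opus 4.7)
The plan is to verify the Bieri--Eckmann criterion (Theorem \ref{dual}) with $n=2$, taking the dualizing module to be $H^2(G,\ZG)$; Theorem \ref{rest} then supplies its free abelianness. Two conditions must be established: (i) $H^i(G,\ZG)=0$ for every $i\neq 2$, and (ii) $G$ is of type $FP$.

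For (i), the vanishing in degrees $i\geq 3$ is immediate from $\mathrm{cd}(G)=2$. In degree zero, since $\mathrm{cd}(G)>0$ and $G$ is torsion-free, $G$ is infinite, so $H^0(G,\ZG)=(\ZG)^G=0$. The delicate case is degree one, where $H^1(G,\ZG)$ measures the ends of $G$ and vanishes exactly when $G$ is one-ended. Here I would invoke the projective (hence K\"ahler) hypothesis: by Gromov's theorem, a K\"ahler group cannot split nontrivially as a free product, so $G$ cannot have infinitely many ends by Stallings' theorem; nor can it have exactly two ends, since that would force $G$ to be virtually infinite cyclic and hence of cohomological dimension at most one, contradicting $\mathrm{cd}(G)=2$. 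This forces $G$ to be one-ended.

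For (ii), $G=\pi_1(X)$ is finitely presented as the fundamental group of a compact complex projective manifold, and so is of type $FP_2$. Combined with $\mathrm{cd}(G)=2$, the second syzygy in a finitely generated partial free resolution $F_2\to F_1\to F_0\to \Z\to 0$ is automatically projective, and upgrading to $FP$ amounts to showing this syzygy is also finitely generated. In the case where the Cartan--Remmert reduction $\til Y$ of $\til X$ is a Riemann surface (Lemma \ref{rs}), $G$ is the fundamental group of a closed Riemann surface and is of type $FL$ by the classical theory, leaving only the complex-surface case. With (i) and (ii) in hand, Theorem \ref{dual} with $n=2$ concludes that $G$ is a duality group with dualizing module $H^2(G,\ZG)$, which is free abelian by Theorem \ref{rest}.

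The main obstacle I anticipate is the $FP$ upgrade in (ii): over the non-Noetherian ring $\ZG$, finite presentation together with $\mathrm{cd}=2$ does not automatically yield $FP$, since a submodule of a finitely generated free $\ZG$-module need not itself be finitely generated. One would like to extract a finite projective resolution of $\Z$ over $\ZG$ from the holomorphically convex geometry, but note that $Y=\til Y/G$ is typically not a $K(G,1)$ in the complex-surface case (Lemma \ref{h4y} gives $H^4(Y)=\Z$ while $\mathrm{cd}(G)=2$ forces $H^4(G,\Z)=0$), so a direct appeal to $Y$ fails and further construction via the Cartan--Remmert map $\phi\colon\til X\to\til Y$ or the already-established cohomological vanishing in Corollary \ref{h2} would be needed.
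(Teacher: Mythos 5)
Your overall strategy is the right one, and indeed it is the only content the paper itself supplies: the paper offers no written proof of Proposition \ref{idcor} beyond the assertion that it is ``a consequence of Theorem \ref{rest} and Theorem \ref{dual},'' so the two verifications you isolate --- (i) $H^i(G,\ZG)=0$ for $i\neq 2$ and (ii) type $FP$ --- are exactly what has to be checked. Your treatment of (i) is correct: degrees $\geq 3$ from $\mathrm{cd}(G)=2$, degree $0$ from $G$ being infinite, and degree $1$ from one-endedness via Stallings together with the theorem of Gromov (and Arapura--Bressler--Ramachandran) that a K\"ahler group is not a nontrivial free product. That last input is external to the paper; an alternative route internal to the paper's machinery is to read off degree one of the spectral sequence in Proposition \ref{leraycoh}: since $H^1(\til{X},R)=0$ one gets $H^1(G,\ZG)=H^1(X,p_!\Z)=H^1_c(\til{X},\Z)=H_3(\til{X},\Z)=0$ by Poincar\'e duality and Lemma \ref{inj}, and similarly $H^0(G,\ZG)=H_4(\til{X},\Z)=0$ as $\til X$ is a noncompact $4$-manifold.

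The one genuine defect is that you leave step (ii) open and, worse, describe it as a real obstruction; in fact it closes in one line and your non-Noetherian worry is a red herring. Take a partial free resolution $F_2\longrightarrow F_1\longrightarrow F_0\longrightarrow \Z\longrightarrow 0$ by finitely generated free $\ZG$-modules (which exists since $G$ is finitely presented, hence of type $FP_2$). The module $K=\ker(F_1\to F_0)$ is not merely a submodule of $F_1$: by exactness it equals $\mathrm{image}(F_2\to F_1)$, hence is a quotient of $F_2$ and therefore finitely generated. Since $\mathrm{cd}(G)\leq 2$, the standard lemma (Brown, VIII.2.1) applied to the exact sequence $0\to K\to F_1\to F_0\to\Z\to 0$ shows $K$ is projective. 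Thus this sequence is already a finite-length resolution of $\Z$ by finitely generated projectives, and $G$ is of type $FP$; no appeal to the Cartan--Remmert geometry or to Corollary \ref{h2} is needed for this point. With (i) and (ii) in hand, Theorem \ref{dual} makes $G$ a duality group of dimension two with dualizing module $I=H^2(G,\ZG)$, which is free abelian by Theorem \ref{rest}, completing the argument.
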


We end with the following simple fact about duality groups.

\begin{prop}\label{homal} Let $G$ be a two-dimensional duality group, and let 
$I$ be the dualizing module.
Then for any $\ZG$-module $N$, the cohomology $H^{2}(G,\, N \otimes \ZG)$ is
isomorphic to $N \otimes I$.
\end{prop}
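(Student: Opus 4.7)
The plan is to apply the duality isomorphism of Theorem \ref{dual} with the top degree $i = n = 2$ and then untwist the diagonal action on $\ZG$ in order to compute the resulting coinvariants explicitly.

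First, I would invoke Theorem \ref{dual} with $n = 2$, $i = 2$, and $A = N \otimes_{\Z} \ZG$ (equipped with the diagonal $G$--action), which yields
$$H^2(G,\, N \otimes \ZG) \;\simeq\; H_0\bigl(G,\, I \otimes_\Z N \otimes_\Z \ZG\bigr).$$
Since $H_0(G,\,M)$ is the module of $G$--coinvariants $M_G$, the task reduces to computing the coinvariants of $I \otimes_\Z N \otimes_\Z \ZG$ with respect to the diagonal $G$--action.

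Next, I would use the standard untwisting isomorphism: for any $\ZG$--module $M$, the map
$$M \otimes_\Z \ZG \;\longrightarrow\; M^{\mathrm{triv}} \otimes_\Z \ZG, \qquad m \otimes g \;\longmapsto\; g^{-1} m \otimes g,$$
is $G$--equivariant when the source carries the diagonal action and the target carries the left-multiplication action on the $\ZG$--factor only (with trivial action on $M$). Applying this with $M = I \otimes_\Z N$ and then taking $G$--coinvariants gives
$$\bigl(I \otimes_\Z N \otimes_\Z \ZG\bigr)_G \;\cong\; (I \otimes_\Z N) \otimes_\Z (\ZG)_G \;\cong\; N \otimes I,$$
because $(\ZG)_G = \Z$ (under left multiplication all group elements become equivalent). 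Composing this with the duality isomorphism produces the desired identification $H^2(G,\, N \otimes \ZG) \cong N \otimes I$.

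The argument is essentially formal once the conventions are fixed; the only point requiring care is the bookkeeping of the left/right $\ZG$--module structures on $\ZG$ and the resulting diagonal action, in line with the convention adopted after the proof of Proposition \ref{leraycoh}. I do not anticipate any substantive obstacle beyond this bookkeeping, and the dimension-two hypothesis is used only insofar as it allows duality to convert $H^2$ into $H_0$.
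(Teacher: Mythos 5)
Your proposal is correct and follows essentially the same route as the paper: apply the Bieri--Eckmann duality isomorphism in top degree to reduce $H^2(G,\,N\otimes\ZG)$ to the coinvariants $(I\otimes N\otimes\ZG)_G$, then identify these with $N\otimes I$. The only difference is cosmetic --- you compute the coinvariants by the explicit untwisting map $m\otimes g\mapsto g^{-1}m\otimes g$, whereas the paper cites the identity $(B\otimes C)_G = B\otimes_{\ZG}C$ from Brown and uses $(N\otimes I)\otimes_{\ZG}\ZG = N\otimes I$; these are the same computation.
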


\begin{proof}
Since $G$ is a two-dimensional duality group, the dualizing module $I$ is
isomorphic to $H^{2}(G,\, \ZG)$ as a $\ZG$--module. Hence, by Theorem 
\ref{dual},
for any $G$-module $Q$, we have $H^{2}(G,\,Q)\,=\,H_0(G,\, Q \otimes I)$.
Further, $H_0(G,\, Q \otimes I)\,=\,(Q \otimes I)_G$ (see \cite[p. 55]{brown}).

Next, taking $Q\,=\, N \otimes \ZG$ it follows that $H^{2}(G,\,N \otimes \ZG)$ 
is isomorphic to $(N\otimes \ZG \otimes I)_G$, which, in turn,
is isomorphic to $(N \otimes I \otimes \ZG )_G$.

Finally, for any $\ZG$--modules $B$ and $C$, we have $(B \otimes C)_G \,=\,
B\otimes_{\Z G} C$ \cite[p. 55]{brown}. Hence $$(N \otimes I \otimes \ZG 
)_G\,=\,((N
\otimes I) \otimes \ZG )_G\,= \,(N \otimes I) \otimes_{\Z G} \ZG\,=\,N \otimes
I\, .$$
Thus $H^{2}(G,\, N \otimes \ZG)$ is isomorphic to $N\otimes I$.
\end{proof}

\section*{Acknowledgments}

We thank R. V. Gurjar for generously sharing his notes on \cite{gurj} with us 
and for telling us that Deligne has observed that the techniques of \cite{gurj} 
lead to a proof of Proposition \ref{free}. We are very grateful to Gadde Anandaswarup
for help with homological algebra and to Ross Geoghegan for helpful email-correspondence
on semistability. Different parts of this work were done while the authors were visiting
Harish-Chandra Research Institute, Allahabad, and Institute of Mathematical Sciences,
Chennai and during a visit of the second author to Tata Institute of Fundamental
Research, Mumbai. We thank these institutions for their hospitality.

\end{document}